\theoremstyle{thmstyleone}%
\newtheorem{theorem}{Theorem}
\theoremstyle{thmstyletwo}%
\newtheorem{remark}{Remark}%
\theoremstyle{thmstylethree}%
\newtheorem{definition}{Definition}%
\newtheorem{corollary}{Corollary}
\begin{document}

\title[Minimal Finite Model of Wedge Sum of Spheres]{Minimal Finite Model of Wedge Sum of Spheres}


\author[1]{\fnm{Ponaki} \sur{Das}}\email{ponaki.das20@gmail.com}
\equalcont{These authors contributed equally to this work.}

\author*[2]{\fnm{Sainkupar Marwein} \sur{Mawiong}}\email{skupar@gmail.com}
\equalcont{These authors contributed equally to this work.}


\affil[1]{\orgdiv{Department of Mathematics}, \orgname{North-Eastern Hill University}, \orgaddress{\street{NEHU Campus}, \city{Shillong}, \postcode{793022}, \state{Meghalaya}, \country{India}}}

\affil*[2]{\orgdiv{Department of Basic Sciences and Social Sciences}, \orgname{North-Eastern Hill University}, \orgaddress{\street{NEHU Campus}, \city{Shillong}, \postcode{793022}, \state{Meghalaya}, \country{India}}}


\abstract{
		We classify minimal finite models of the M\"{o}bius band and several wedge sums of spheres. 
		In particular, we show that the minimal finite model of the M\"{o}bius band coincides with that of the circle $S^{1}$. 
		Furthermore, we prove that both $S^{2}\vee S^{1}$ and $S^{2}\vee S^{2}$ admit minimal finite models on exactly seven points, and that each of $S^{1}\vee S^{1}\vee S^{2}$, $S^{1}\vee S^{1}\vee S^{1}\vee S^{2}$, $S^{1}\vee S^{2}\vee S^{2}$, $S^{2}\vee S^{2}\vee S^{2}$, and $S^{2}\vee S^{2}\vee S^{2}\vee S^{2}$ admits a minimal finite model on exactly eight points.}

\keywords{Finite spaces; Posets; Weak contractible spaces; Minimal finite model.}

\pacs[MSC Classification]{06A99,18B35,55P10,55P15.}

\maketitle

\section{Introduction}

Minimal finite models are finite $T_{0}$-spaces whose weak homotopy type coincides with that of a given topological space. 
Introduced by Barmak and Minian as a combinatorial tool in algebraic topology, they capture the essential homotopy information of a space within a smallest possible finite structure. 
These models have been used to study spheres, graphs, and wedges of circles, and to compute algebraic invariants in a purely combinatorial setting. 
Further developments by Cianci and Ottina introduced the poset-splitting technique, which enabled the classification of minimal finite models for surfaces such as the real projective plane, torus, and Klein bottle~\cite{Cianci-Ottina(2018),Cianci-Ottina(2020)}.

However, the classification of minimal finite models for wedge sums of spheres, especially when spheres of different dimensions are combined, remains incomplete. 
While Hilton’s classical work on the homotopy groups of wedge sums~\cite{Hilton(1955)} provides the underlying homotopical framework, explicit minimal finite models in these cases are largely unknown.

The main results of this paper are as follows:
\begin{enumerate}
	\item The M\"{o}bius band admits a four-point minimal finite model, homeomorphic to that of $S^{1}$.
	\item Up to homeomorphism, $S^{2}\vee S^{1}$ admits two seven-point minimal finite models.
	\item Up to homeomorphism, $S^{2}\vee S^{2}$ admits three seven-point minimal finite models.
	\item Up to homeomorphism, $S^{1}\vee S^{1}\vee S^{2}$ admits seven eight-point minimal finite models.
	\item Up to homeomorphism, $S^{1}\vee S^{1}\vee S^{1}\vee S^{2}$ admits one eight-point minimal finite model.
	\item Up to homeomorphism, $S^{1}\vee S^{2}\vee S^{2}$ admits six eight-point minimal finite models.
	\item Up to homeomorphism, $S^{2}\vee S^{2}\vee S^{2}$ admits five eight-point minimal finite models.
	\item Up to homeomorphism, $S^{2}\vee S^{2}\vee S^{2}\vee S^{2}$ admits three eight-point minimal finite models.
\end{enumerate}

The paper is organized as follows. 
Section~2 recalls the necessary background on finite spaces and summarizes relevant results from~\cite{Barmak(2011)} and~\cite{Cianci-Ottina(2018)}. 
Section~3 contains the constructions and classifications of minimal finite models for the M\"{o}bius band and for the wedge sums listed above. 
Section~4 discusses the observed patterns and possible generalizations to wedge sums formed from copies of $S^{2}$ and $S^{1}$.

\section{Preliminaries}

This section collects the basic notions and results on finite spaces that will be used throughout the paper.  
We follow the notation of Barmak \cite{Barmak(2011)} and Cianci-Ottina \cite{Cianci-Ottina(2018)}.

\begin{definition}[Minimal finite model {\cite{Barmak(2011)}}]\label{1}
Let $X$ be a topological space.  
A finite space $Y$ is a \emph{finite model} of $X$ if $Y$ is weak homotopy equivalent to $X$.  
It is a \emph{minimal finite model} of $X$ if $Y$ has the smallest possible cardinality among all such models.
\end{definition}
\begin{definition}[Weak homotopy equivalence {\cite{Barmak(2011)}}]\label{2}
A continuous map $f\colon X\to Y$ is a \emph{weak homotopy equivalence} if it induces a bijection
$f_\ast\colon \pi_0(X)\to \pi_0(Y)$ and isomorphisms
$f_\ast\colon \pi_n(X,x_0)\to \pi_n(Y,f(x_0))$, for all $n\ge 1$ and every base point $x_0\in X$.
\end{definition}

\medskip
Following Barmak \cite[Chap.~1]{Barmak(2011)}, every finite $T_0$-space $X$ can be regarded as a poset under the specialization order.  
For each point $x \in X$, the \emph{minimal open set} and the \emph{closure of the set $\{x\}$ } are defined by
\[
U_{x} = \{\, y \in X \mid y \le x \,\}, \qquad
F_{x} = \{\, y \in X \mid y \ge x \,\},
\]
and their \emph{reduced} versions are
\[
\hat{U}_{x} = U_{x} \setminus \{ x \}, \qquad
\hat{F}_{x} = F_{x} \setminus \{ x \}.
\]
These notations, introduced in \cite{Barmak(2011)}, will be used throughout this paper to describe the local neighborhood structure of points in finite spaces.

The following results provide sharp constraints on the size of minimal finite models for spheres and the wedge sum of 1-dimentional spheres.

\begin{theorem}\cite{Barmak(2007),Barmak(2011)}\label{7}
	Let $X\neq \ast$ be a minimal finite space. Then $X$ has at least $2h(X)+2$ points. Moreover, if $X$ has exactly $2h(X)+2$ points, then it is homeomorphic to $\mathbb{S}^{h(X)}(S^0)$.
\end{theorem}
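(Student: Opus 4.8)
The plan is to recast everything in poset language and argue via the rank stratification of $X$.

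I would begin from the standard dictionary: a finite $T_0$-space is minimal precisely when it has no beat points, equivalently when every non-minimal point covers at least two points and every non-maximal point is covered by at least two points. I will also use Stong's theorem that a minimal finite space which is contractible is a single point, together with its consequence that any finite $T_0$-space with a maximum or a minimum is contractible. Write $h = h(X)$ and let $\rho(x) = h(U_{x})$ be the rank of $x$; stratify $X = X_0 \sqcup \cdots \sqcup X_h$ with $X_i = \rho^{-1}(i)$. Every rank is attained (along a chain of maximal length), $x < y$ implies $\rho(x) < \rho(y)$, and every point of rank $k \geq 1$ covers a point of rank $k-1$ (the penultimate vertex of a longest chain ending at it).

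The inequality $|X| \geq 2h+2$ follows once we show $|X_i| \geq 2$ for every $i$, since then $|X| = \sum_i |X_i| \geq 2(h+1)$. If $X$ had a unique minimal point, that point would be the minimum of $X$, so $X$ would be contractible, hence a point --- which is excluded; thus $|X_0| \geq 2$. For $1 \leq i \leq h$ suppose $X_i = \{v\}$. Then $v$ covers some $p$ of rank $i-1$, and a short induction on rank shows that every $z$ with $\rho(z) \geq i+1$ satisfies $z > v$: such a $z$ covers a point of rank $\rho(z)-1$, which is $v$ if that rank is $i$ and otherwise dominates $v$ by the inductive hypothesis. Consequently every element of $\hat{F}_{p}$ is $\geq v$, so $v = \min \hat{F}_{p}$ and $p$ is an up-beat point, contradicting minimality. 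Hence $|X| \geq 2h+2$.

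For the extremal case suppose $|X| = 2h+2$. If $X = A \sqcup B$ is a nontrivial decomposition with $h(A) = h$, then either $A = \ast$, which forces $h = 0$ and $X \cong \mathbb{S}^{0}(S^{0})$ (the two-point discrete space), or $|X| \geq |A| + 1 \geq 2h + 3$ by the bound just proved; so for $h \geq 1$ we may assume $X$ connected, whence $|X_i| = 2$ for all $i$, say $X_i = \{a_i, b_i\}$. I would then prove, by induction on $i$ from $1$ to $h$, that the covers between rank $i-1$ and rank $i$ form the complete bipartite relation $a_{i-1}, b_{i-1} \lessdot a_i, b_i$ and that no point of rank $i-1$ has a cover of rank $> i$. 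The inductive step uses minimality twice: a rank-$i$ point has at least two lower covers, and none of them can have rank $\ell \leq i-2$ (that point would then have an upper cover of rank $i$, whereas by the inductive hypothesis its upper covers all have rank $\ell+1$), so both lower covers lie in rank $i-1$ and hence equal $a_{i-1}$ and $b_{i-1}$; once this complete bipartite relation is in place, a rank-$(i-1)$ point with a cover of rank $\geq i+1$ would lie strictly below a rank-$i$ point that lies strictly below that cover, which is absurd. At the end every cover lies between consecutive ranks and forms a copy of $K_{2,2}$ there, so $X \cong \mathbb{S}^{h}(S^{0})$.

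The most delicate step, I expect, is this last rigidity argument --- keeping track of every covering relation rank by rank and ruling out rank-skipping covers and incomplete bipartite layers, using only the two-cover characterisation of minimality. By contrast, the observation that powers the lower bound, that a solitary point at a positive rank must sit directly above an up-beat point, is short once the rank stratification is in place.
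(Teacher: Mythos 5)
The paper imports this theorem from Barmak--Minian and gives no proof of its own, so there is no internal argument to compare against; your proposal is correct and follows essentially the standard route: stratify $X$ by $\rho(x)=h(U_x)$, use the absence of beat points to force at least two points in each stratum (a solitary point at positive rank sits directly above an up-beat point), and in the extremal case propagate the two-lower-cover condition level by level to force the $K_{2,2}$ Hasse diagram of $\mathbb{S}^{h(X)}(S^0)$. All steps check out, including the two delicate ones: the exclusion of rank-skipping covers via the inductive hypothesis at lower levels, and the reduction of the disconnected case to $h=0$.
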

\begin{theorem}\cite{Barmak(2007),Barmak(2011)}\label{8}
	Any space with the same homotopy groups as $S^n$ has at least $2n+2$ points. Moreover, $\mathbb{S}^{n}(S^0)$ is the unique space with $2n+2$ points with this property.
\end{theorem}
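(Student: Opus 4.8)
\medskip
The plan is to convert an arbitrary finite space realizing the homotopy groups of $S^{n}$ into a minimal finite space without increasing the number of points, and then invoke Theorem~\ref{7}. Let $X$ be a finite space with the same homotopy groups as $S^{n}$; in particular $X$ is connected and has a nontrivial homotopy group. First I would pass to the Kolmogorov quotient $X_{0}$, which is $T_{0}$, satisfies $|X_{0}|\le|X|$, and receives a weak homotopy equivalence from $X$ (see \cite{Barmak(2011)}); hence $X_{0}$ again has the homotopy groups of $S^{n}$. Successively deleting beat points then yields the core $X_{c}$, a minimal finite space homotopy equivalent to $X_{0}$ with $|X_{c}|\le|X_{0}|\le|X|$. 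Since $X_{c}$ has a nontrivial homotopy group it is not a single point, so Theorem~\ref{7} applies and gives $|X_{c}|\ge 2h(X_{c})+2$.

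The key step is the inequality $h(X_{c})\ge n$. By McCord's theorem (see \cite{Barmak(2011)}), the order complex $\mathcal{K}(X_{c})$, whose simplices are the nonempty chains of the poset $X_{c}$, is weak homotopy equivalent to $X_{c}$ and therefore has the homotopy groups of $S^{n}$. The Hurewicz theorem then gives $H_{n}(\mathcal{K}(X_{c});\mathbb{Z})\neq 0$: for $n\ge 2$ the complex is $(n-1)$-connected, so $H_{n}\cong\pi_{n}(S^{n})=\mathbb{Z}$, and for $n=1$ one has $H_{1}\cong\mathbb{Z}$ since $\pi_{1}(S^{1})=\mathbb{Z}$ is abelian. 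But $\mathcal{K}(X_{c})$ has dimension exactly $h(X_{c})$, so its integral homology vanishes above degree $h(X_{c})$; hence $h(X_{c})\ge n$. Combining the inequalities, $|X|\ge|X_{c}|\ge 2h(X_{c})+2\ge 2n+2$, which proves the first assertion.

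For the uniqueness clause, suppose $|X|=2n+2$. Then every inequality above is forced to be an equality. From $|X_{0}|=|X|$ we conclude that $X$ is already $T_{0}$; from $|X_{c}|=|X_{0}|$ that $X$ has no beat points, so $X=X_{c}$ is itself a minimal finite space; and from $2n+2=|X_{c}|\ge 2h(X_{c})+2\ge 2n+2$ that $h(X_{c})=n$ and $|X_{c}|=2h(X_{c})+2$. The equality clause of Theorem~\ref{7} now yields $X\cong\mathbb{S}^{h(X_{c})}(S^{0})=\mathbb{S}^{n}(S^{0})$. Conversely, $\mathbb{S}^{n}(S^{0})$ has $2n+2$ points and is weak homotopy equivalent to $S^{n}$, so it is the unique space on $2n+2$ points with the homotopy groups of $S^{n}$.

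The main obstacle is the height estimate $h(X_{c})\ge n$; everything else is routine bookkeeping with the core construction. Once McCord's theorem and the Hurewicz theorem are available this estimate is short, so the statement is in effect a corollary of Theorem~\ref{7} together with the standard correspondence between finite $T_{0}$-spaces and simplicial complexes.
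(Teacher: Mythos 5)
The paper states this theorem without proof, merely citing Barmak--Minian, and your argument is correct and is essentially the proof given in those references: pass to the $T_0$-quotient and then to the core, bound the height from below by applying McCord's theorem and Hurewicz to the order complex, and finish with Theorem~\ref{7} together with its equality clause for the uniqueness statement. I see no gaps; the only implicit reduction worth flagging is that the claim is vacuous for infinite spaces, so restricting at the outset to finite $X$ is harmless.
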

\begin{theorem}\cite{Barmak(2007),Barmak(2011)}\label{8.1}
	Let, $n\in \mathbb{N}$. A finite $T_{0}$-space $X$ is a minimal finite model of $\bigvee\limits_{i=1}^{n} S^{1}$ if and only if $h(X)= 1$. Here, $\# X= min \{i+ j\mid (i-1)(j-1)\geq n\}$ and $\# E(\mathcal{H}(X))= \# X + n-1$, where, $i= \# \{y\in X\mid$ y is maximal $\}$ and $j= \# \{y\in X\mid$ y is minimal $\}$.
\end{theorem}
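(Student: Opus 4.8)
The plan is to pass from the finite $T_0$-space $X$ to its order complex $\mathcal{K}(X)$ (the simplicial complex whose simplices are the chains of $X$), where the weak homotopy type becomes visible, reduce to spaces without beat points, pin down the height, and finally solve a small extremal graph problem. Concretely, I would first invoke McCord's theorem (see~\cite{Barmak(2011)}): the natural map $|\mathcal{K}(X)|\to X$ is a weak homotopy equivalence, so $X$ is a finite model of $\bigvee_{i=1}^{n}S^{1}$ if and only if $|\mathcal{K}(X)|$ is homotopy equivalent to $\bigvee_{i=1}^{n}S^{1}$. Since removing a beat point is a strong deformation retraction and removing a weak point is a weak homotopy equivalence, a space of least cardinality in its weak homotopy type has no weak points, and in particular no beat points; hence a minimal finite model of $\bigvee_{i=1}^{n}S^{1}$ is connected and beat-point free, and I may restrict attention to such $X$.

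\emph{The height equals one.} Height $0$ is impossible, since a connected height-$0$ space is a single point. To exclude $h(X)\ge 2$ — the step I expect to be the main obstacle — I would first use Theorem~\ref{7} to dispose of the extremal case $\#X=2h(X)+2$ (there $X\cong\mathbb{S}^{h(X)}(S^{0})$, which is simply connected and so not a model for $n\ge 1$), and then argue homologically for the remaining cases: from $\chi(\mathcal{K}(X))=1-n$ together with $H_{k}(\mathcal{K}(X))=0$ for all $k\ge 2$ one obtains $n=\#E-\#X+1-\#F+\cdots$, where $\#E,\#F,\dots$ count the chains of $X$ with $2,3,\dots$ elements; a local analysis of the subposets $\hat{U}_{x}$, $\hat{F}_{x}$ at a maximal point $x$ — nonempty and, since $X$ has no weak points, not weakly contractible — then shows that a chain of length $\ge 2$ would force a nonzero $H_{k}$ with $k\ge 2$ or a beat point. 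This is exactly the situation that the poset-splitting technique of~\cite{Cianci-Ottina(2018)} is designed to control, and I would carry out this step with those tools; the conclusion is $h(X)=1$.

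\emph{The bipartite-graph dictionary.} Once $h(X)=1$, write $X=A\sqcup B$ with $A$ its set of $i$ maximal points and $B$ its set of $j$ minimal points. Then $\mathcal{H}(X)$ is the bipartite graph $G$ on $A\sqcup B$ whose edges are the relations $b<a$, and $\mathcal{K}(X)=G$ as a $1$-dimensional complex, so $|\mathcal{K}(X)|$ is a topological graph; a connected graph is homotopy equivalent to $\bigvee_{m}S^{1}$ with $m=\#E(G)-\#V(G)+1$ its first Betti number. Hence $X$ is a finite model of $\bigvee_{i=1}^{n}S^{1}$ iff $G$ is connected and $\#E(\mathcal{H}(X))=\#X+n-1$. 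Moreover, at height $1$ a point is a beat point exactly when it has degree $\le 1$ in $G$ (for $x\in A$ the set $\hat{U}_{x}$ is the discrete set of its $G$-neighbours, which has a maximum only if it is a singleton; dually for $x\in B$; degree $0$ would disconnect $G$), and there are no further weak points since $\hat{U}_{x}$, $\hat{F}_{x}$ are discrete. Therefore $X$ is a minimal finite model of $\bigvee_{i=1}^{n}S^{1}$ if and only if $G$ is connected, has minimum degree $\ge 2$, satisfies $\#E(\mathcal{H}(X))=\#X+n-1$, and has the smallest vertex count among all graphs with these properties.

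\emph{The extremal problem.} It remains to minimize $\#X=i+j$ subject to the existence of a connected bipartite graph with parts of sizes $i$ and $j$, minimum degree $\ge 2$, and exactly $i+j+n-1$ edges. Such a graph has at most $ij$ edges, so $(i-1)(j-1)\ge n$ is necessary, which forces $i,j\ge 2$ when $n\ge 1$; for a fixed sum $s=i+j$ the product $(i-1)(j-1)$ is maximal for a balanced pair, so the smallest feasible $s$ is attained by one, and there one checks that $i+j+n-1$ lies in the realizable range $[\,2\max(i,j),\,ij\,]$ and that every edge count in that range occurs for some connected, minimum-degree-$2$ bipartite graph on parts $i,j$ (start from a minimal such graph and add the remaining edges of $K_{i,j}$ one at a time). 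The height-one poset having the two parts of that graph as its maximal and minimal elements, with the relations given by the edges, is then a minimal finite model of $\bigvee_{i=1}^{n}S^{1}$ on $\#X=\min\{\,i+j\mid (i-1)(j-1)\ge n\,\}$ points with $\#E(\mathcal{H}(X))=\#X+n-1$. Together with the height-one step this establishes the characterization and the stated formulas; the delicate point, on which I would spend most of the effort, is ruling out height $\ge 2$.
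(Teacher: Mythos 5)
This is a quoted result (Barmak--Minian); the paper gives no proof of Theorem~\ref{8.1}, so I can only assess your argument on its own terms. Your bipartite-graph dictionary for height-one posets and the extremal computation yielding $\#X=\min\{i+j\mid (i-1)(j-1)\ge n\}$ and $\#E(\mathcal{H}(X))=\#X+n-1$ are correct and are essentially the standard argument.

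The gap is exactly where you predicted it, in the step $h(X)=1$, and the mechanism you propose for it does not work. You claim that for a connected, beat-point-free $X$ with the homology of a wedge of circles, a chain of length $\ge 2$ "would force a nonzero $H_{k}$ with $k\ge 2$ or a beat point." This is false, and the counterexamples appear in this very paper: the eight-point posets of Figure~\ref{n} are connected cores (no beat points, as stated at the end of the proof of Theorem~\ref{19}), have height $2$, and are weak homotopy equivalent to $S^{1}\vee S^{1}\vee S^{1}$, $\bigvee_{4}S^{1}$ and $\bigvee_{5}S^{1}$, with vanishing homology in degrees $\ge 2$. So beat-point-free height-$2$ finite models of wedges of circles exist in abundance, and no local analysis of $\hat{U}_{x}$, $\hat{F}_{x}$ can exclude them; the Euler-characteristic identity you write down is true but carries no obstruction by itself. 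What must actually be proved is quantitative: every such higher-height model has at least as many points as the height-one model your extremal construction produces. The Barmak--Minian route is to show first that for any connected finite $T_{0}$-space the group $\pi_{1}(|\mathcal{K}(X)|)$ is generated by edge-loops supported on the Hasse diagram, hence is a quotient of a free group of rank $\#E(\mathcal{H}(X))-\#X+1$; since a free group of rank $n$ is not a quotient of a free group of smaller rank, any finite model of $\bigvee_{n}S^{1}$ satisfies $\#E(\mathcal{H}(X))\ge \#X+n-1$. This is then combined with a counting argument comparing $\#E(\mathcal{H}(X))$ against $\#X$ for beat-point-free posets to show that the minimum cardinality is attained only at height one. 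Without an argument of this comparative kind, the central claim of your proof is unsupported.
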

\begin{corollary}\cite{Barmak(2007),Barmak(2011)}\label{8.2}
	The cardinality of a minimal finite model of $\bigvee_{i=1}^{n} S^{1}$ is
	\[
	\min\left\{
	2\Bigl[\sqrt{n}+1\Bigr],\;
	2\Biggl[\frac{1+\sqrt{1+4n}}{2}\Biggr] + 1
	\right\}.
	\]
\end{corollary}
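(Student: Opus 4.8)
The statement is an immediate corollary of Theorem~\ref{8.1}, which already asserts that the cardinality of a minimal finite model of $\bigvee_{i=1}^{n} S^{1}$ equals the combinatorial minimum
\[
c(n) \;=\; \min\bigl\{\, i+j \;:\; i,j\in\mathbb{Z},\ (i-1)(j-1)\ge n \,\bigr\}.
\]
So the plan is simply to put $c(n)$ into closed form. First I would substitute $a=i-1$ and $b=j-1$, rewriting
\[
c(n) \;=\; 2 + g(n), \qquad
g(n) \;=\; \min\bigl\{\, a+b \;:\; a,b\in\mathbb{Z}_{\ge 1},\ ab\ge n \,\bigr\}.
\]
The constraints $a,b\ge 1$ (equivalently $i,j\ge 2$) come for free, since $n\ge 1$ forces $ab\ge 1$, and $g(n)$ is well defined because $(a,b)=(n,1)$ is feasible.

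The key step is an exchange argument showing that the optimum in $g(n)$ is attained by an almost-square rectangle, i.e.\ with $|a-b|\le 1$. Indeed, if a feasible pair has $b\ge a+2$, then $(a+1,b-1)$ is still feasible with the same sum, because
\[
(a+1)(b-1)-ab \;=\; b-a-1 \;\ge\; 1 \;>\; 0,
\]
so the product does not drop below $n$; iterating this move (which keeps both coordinates $\ge 1$) reaches a feasible pair with $|a-b|\le 1$ and sum no larger than the starting one. Hence $g(n)$ is the smaller of the two optima over the shapes $a=b$ and $b=a+1$, and within each shape $a+b$ is monotone in $a$, so one only needs the least feasible $a$.

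For $a=b$ the constraint is $a^{2}\ge n$, with least solution $a=[\sqrt{n}\,]$, contributing $2[\sqrt{n}\,]+2 = 2[\sqrt{n}+1]$ to $c(n)$. For $b=a+1$ the constraint is $a(a+1)\ge n$, i.e.\ $a\ge\tfrac{-1+\sqrt{1+4n}}{2}$, with least solution $a=\bigl[\tfrac{-1+\sqrt{1+4n}}{2}\bigr]$; using $[x+1]=[x]+1$ this contributes $2a+3 = 2\bigl[\tfrac{1+\sqrt{1+4n}}{2}\bigr]+1$ to $c(n)$. Taking the minimum of the two contributions gives exactly the displayed formula.

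I expect no substantive obstacle here: the proof is routine discrete optimization. The only places to be careful are verifying that the exchange move $b\ge a+2 \mapsto (a+1,b-1)$ never leaves the feasible region and never increases $a+b$, and the ceiling bookkeeping when converting the inequalities $a^{2}\ge n$ and $a(a+1)\ge n$ into the stated closed forms.
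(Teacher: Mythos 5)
Your proposal is correct: the exchange argument reducing to $|a-b|\le 1$, the monotonicity within each shape, and the ceiling bookkeeping (using $\lceil x+1\rceil=\lceil x\rceil+1$ to match the stated forms $2[\sqrt{n}+1]$ and $2\bigl[\tfrac{1+\sqrt{1+4n}}{2}\bigr]+1$) all check out, e.g.\ against $n=1,2,3$ giving $4,5,6$ points. The paper itself states this corollary without proof, citing Barmak--Minian, and your derivation from Theorem~\ref{8.1} is precisely the intended one, so there is nothing to add.
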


\section{Main Results}

\noindent
Finite models arise naturally in the context of regular CW--complexes. 
If $K$ is a regular CW--complex, that is, each characteristic map is a homeomorphism onto its image, 
then the face poset $\chi(K)$, endowed with the Alexandroff topology, provides a finite model of $K$~\cite{Barmak-Minian(2008)}. 
In this setting, the study of minimal finite models is closely related to identifying regular CW--structures that realize a given topological space with the smallest possible number of cells. 

\medskip

\noindent
Once the minimal cardinality of a finite model of a space has been established, 
the problem of determining all minimal finite models reduces to the classification of finite $T_{0}$--spaces with that number of points 
and the analysis of their homotopy types. 
Such a classification yields a complete description of all finite spaces that serve as minimal models for the given topological space. 

\medskip

\noindent
To this end, we employ CW--complex, regular CW--complex, and modified regular CW--complex structures 
to construct finite models and establish their minimality by demonstrating that every regular CW--structure of the given space 
requires at least a certain number of cells. 
Consequently, any finite model with the same number of points must necessarily be minimal. 

\medskip

\noindent
In this section, we identify the minimal finite models of the M\"obius band and of the spaces 
$S^{2} \vee S^{1}$, $S^{2} \vee S^{2}$, $S^{1} \vee S^{1} \vee S^{2}$, 
$S^{1} \vee S^{1} \vee S^{1} \vee S^{2}$, $S^{2} \vee S^{2} \vee S^{1}$, 
$S^{2} \vee S^{2} \vee S^{2}$, and $S^{2} \vee S^{2} \vee S^{2} \vee S^{2}$. 
For general background on homotopy theory, homology, and standard topological constructions, 
we refer the reader to~\cite{Greenberg-Harper(1981)},~\cite{Hatcher(2002)},~\cite{Munkres(2000)}, and~\cite{Munkres(1984)}. 

\medskip

\noindent
The \emph{exact minimal finite models} of 
$S^{2} \vee S^{1}$, $S^{2} \vee S^{2}$, $S^{1} \vee S^{1} \vee S^{2}$, 
$S^{1} \vee S^{1} \vee S^{1} \vee S^{2}$, $S^{2} \vee S^{2} \vee S^{1}$, 
$S^{2} \vee S^{2} \vee S^{2}$, and $S^{2} \vee S^{2} \vee S^{2} \vee S^{2}$ 
are depicted in Figures~\ref{d},~\ref{h},~\ref{k},~\ref{q},~\ref{l`},~\ref{h`}, and~\ref{p}, respectively. 
Their classification and verification are presented in Theorems~\ref{18} and~\ref{19}.

\subsection{Minimal finite model of the M\"{o}bius Band}
The M\"{o}bius band $M$ can be described as the quotient of the unit square $[0,1]\times [0,1]$ by the identification $(0,t)\sim(1,1-t)$. Consider the deformation retraction
\[
F((x,y),t) = (x,\, t/2+(1-t)y), \quad (x,y)\in [0,1]\times [0,1],\ t\in [0,1],
\]
which retracts the square onto the interval $[0,1]\times \{1/2\}$. Passing to the quotient shows that $M$ deformation retracts onto its central circle, and hence $M\simeq S^1$.  

Therefore,
\[
\pi_1(M)\cong \pi_1(S^1)\cong \mathbb{Z}, \qquad \pi_n(M)=0 \ \ (n\geq 2),
\]
and
\[
H_n(M) \cong H_n(S^1) = 
\begin{cases}
\mathbb{Z} & n=0,1,\\
0 & \text{otherwise}.
\end{cases}
\]
Since $M$ is homotopy equivalent to $S^1$, it has the same minimal finite model as $S^1$, namely the four-point model described in \cite{Barmak(2011)}.

\subsection{Minimal Finite Models of $\mathbf{S^{2}\vee S^{1}}$ and $\mathbf{S^2\vee S^2}$}
\subsubsection{$\mathbf{S^{2}\vee S^{1}}$}

The wedge $S^2 \vee S^1$ can be regarded as a sphere with a circle attached at one point (Fig.~\ref{a}). By Van Kampen's theorem,
\[
\pi_1(S^2 \vee S^1) \cong \pi_1(S^2) * \pi_1(S^1)/\pi_1(\{pt\}) \cong \mathbb{Z}.
\]
For homology, applying Mayer–Vietoris with $A\simeq S^2$, $B\simeq S^1$, and $A\cap B\simeq \{pt\}$ gives
\[
H_n(S^2 \vee S^1) \cong
\begin{cases}
\mathbb{Z}, & n=0,1,2, \\
0, & n \geq 3.
\end{cases}
\]

\begin{center}
	\begin{tikzpicture}
	\shade[ball color = blue!40, opacity = 0.4] (0,0) circle (1cm);
	\draw (0,0) circle (1cm);
	\draw (2,0) circle (1cm);
	\fill (1,0) circle (1.5pt);
	
	\node at (-1.2,0) {$c_1$};
	\node at (1.2,0) {$c_2$};
	\node at (3.2,0) {$b_1$};
	\node at (0,0) {$a_1$};
	\node at (1,-1.4) {$(A)$};
	\end{tikzpicture}
	\begin{tikzpicture}
	\shade[ball color = blue!40, opacity = 0.4] (0,0) circle (1cm);
	\draw (0,0) circle (1cm);
	\draw (-1,0) arc (180:360:1 and 0.3);
	\draw[dashed] (1,0) arc (0:180:1 and 0.3);
	\draw (2,0) circle (1cm);
	
	\fill (1,0) circle (1.5pt);
	\fill (-1,0) circle (1.5pt);
	\fill (3,0) circle (1.5pt);
	\node at (-1.2,0) {$c_1$};
	\node at (1.3,0) {$c_2$};
	\node at (3.3,0) {$c_3$};
	\node at (.3,.4) {$b_1$};
	\node at (-.3,-.4) {$b_2$};
	\node at (2,1.2) {$b_3$};
	\node at (2,-1.2) {$b_4$};
	\node at (0,.7) {$a_1$};
	\node at (0,-.7) {$a_2$};
	\node at (1,-1.4) {$(B)$};
	\end{tikzpicture}
	\captionsetup{labelfont={bf}, textfont={it}}
	\captionof{figure}{CW--complex Structure and regular CW--complex structure of $S^2 \vee S^1$}
	\label{a}
\end{center}

\begin{definition}
Let $X$ be a finite $T_{0}$-space. The \emph{dimension} of $X$ is defined as
\[
\dim(X) = \dim \mathcal{K}(X),
\]
where $\mathcal{K}(X)$ denotes the order complex of $X$. We say that $X$ is \emph{homogeneous} (pure) of dimension $d$ if every maximal chain in $X$ has length $d+1$, equivalently, if every maximal simplex of $\mathcal{K}(X)$ has dimension $d$.
\end{definition}
\begin{remark}\label{4}
Minimal finite models of $S^n \vee S^m$ with $n\neq m$ are non-homogeneous. In particular, the spaces $S^2\vee S^1$, $S^1\vee S^1\vee S^2$, $S^1\vee S^1\vee S^1\vee S^2$, and $S^1\vee S^2\vee S^2$ admit only non-homogeneous minimal finite models, whereas minimal finite model of $S^2$ \cite{Barmak(2011)}, $S^2\vee S^2$, $S^2\vee S^2\vee S^2$ and $S^2\vee S^2\vee S^2\vee S^2$ are homogeneous (see Theorem \ref{19}).
\end{remark}
A regular CW--structure of $S^2\vee S^1$ (Fig. \ref{a} (B)) with three $0$-cells, four $1$-cells, and two $2$-cells produces a finite model on nine points. Modifying the structure to two $0$-cells, three $1$-cells, and two $2$-cells yields a smaller model with seven points (Fig.~\ref{b}).

\begin{center}
	\begin{tikzpicture}
	\shade[ball color = pink!50, opacity = 2] (0,0) arc (180:360:1.2cm and 1.3cm);
	\draw (0cm,0) arc (180:360:1.2cm and 1.3cm);
	
	\draw [ball color = blue!30, opacity = 2][dashed] (1.2,0) ellipse (1.2cm and .3cm);
	\draw (2.4,0) arc (0:180:1.2 and 1.9);
	
	\node at (-.2,0) {$c_1$};
	\node at (2.6,0) {$c_2$};
	\node at (1.7,0.5) {$b_1$};
	\node at (1.7,-0.5) {$b_2$};
	\node at (1,1.7) {$b_3$};
	\node at (1,0) {$a_1$};
	\node at (1,-.8) {$a_2$};
	
	\end{tikzpicture}
	\begin{tikzpicture}
	[acteur/.style={circle, fill=black,thick, inner sep=2pt, minimum size=0.2cm}] 
	\node (1) at ( 4.5,0) [acteur,label=below:$c_1$]{};
	\node (2) at ( 5.5,0) [acteur,label=below:$c_2$]{};
	\node (3) at ( 4,1) [acteur,label=left:$b_1$]{};
	\node (4) at (5,1) [acteur][label=left:$b_2$]{};
	\node (5) at ( 6,2) [acteur,label=above:$b_3$]{};
	\node (6) at (4,2) [acteur][label=above:$a_1$]{};
	\node (7) at (5,2) [acteur][label=above:$a_2$]{};
	\draw [-, thick, red] (1) -- (3);
	\draw [-, thick, red] (1) -- (4);
	\draw [-, thick, red] (1) -- (5);
	\draw [-, thick, red] (2) -- (3);
	\draw [-, thick, red] (2) -- (4);
	\draw [-, thick, red] (2) -- (5);
	\draw [-, thick, red] (3) -- (6);
	\draw [-, thick, red] (3) -- (7);
	\draw [-, thick, red] (4) -- (6);
	\draw [-, thick, red] (4) -- (7);
	\end{tikzpicture}
	\captionsetup{labelfont={bf}, textfont={it}}
	\captionof{figure}{Modified regular CW--complex structure of $S^2\vee S^1$ and its associated finite space}
	\label{b}
\end{center}
\begin{remark}\label{6}  
The face poset $\chi(K)$ of a regular CW--complex $K$, ordered by inclusion, satisfies $K' = K(\chi(K)) \approx K$. Hence $\chi(K)$ is a finite model of $K$, having the same weak homotopy type.  
\end{remark}

\begin{remark}\label{15}  
A finite model $X$ of $S^2\vee S^1$ satisfies $\pi_1(X)\cong \mathbb{Z}$, hence $X$ is non-contractible. By \cite[Remark 1.2.8]{Barmak(2011)}, every such finite space must contain at least two minimal and two maximal elements. 
\end{remark}

\begin{remark}\label{16}  
For a finite poset $X$, the height $h(X)$ equals $\dim K(X)$ \cite{Barmak(2011)}. Hence, if $X$ is a finite model of $S^2\vee S^1$, then $h(X)=2$, reflecting the dimension of $S^2\vee S^1$.  
\end{remark}

\subsubsection{$\mathbf{S^2\vee S^2}$}

The space $S^2\vee S^2$ is the wedge sum of two $2$-spheres, obtained by identifying a single base point in each copy. We illustrate a CW decomposition of $S^2\vee S^2$ (Fig.~\ref{e} (A)) and a regular CW structure with three 0-cells, four 1-cells, and four 2-cells (Fig.~\ref{e} (B)).
\begin{center}
	\begin{tikzpicture}
	\shade[ball color = blue!40, opacity = 0.4] (0,0) circle (1cm);
	\draw (0,0) circle (1cm);
	\shade[ball color = blue!40, opacity = 0.4] (2,0) circle (1cm);
	\draw (2,0) circle (1cm);
	\fill (1,0) circle (1.5pt);
	
	\node at (1.2,0) {$c_1$};
	\node at (0,0) {$a_1$};
	\node at (2,0) {$a_2$};
	\node at (1,-1.4) {$(A)$};
	\end{tikzpicture}
	\begin{tikzpicture}
	\shade[ball color = blue!40, opacity = 0.4] (0,0) circle (1cm);
	\draw (0,0) circle (1cm);
	\draw (-1,0) arc (180:360:1 and 0.3);
	\draw[dashed] (1,0) arc (0:180:1 and 0.3);
	\shade[ball color = blue!40, opacity = 0.4] (2,0) circle (1cm);
	\draw (2,0) circle (1cm);
	\draw (1,0) arc (180:360:1 and 0.3);
	\draw[dashed] (3,0) arc (0:180:1 and 0.3);
	
	\fill (1,0) circle (1.5pt);
	
	\node at (-1.2,0) {$c_1$};
	\node at (1.2,0) {$c_2$};
	\node at (3.2,0) {$c_3$};
	\node at (.3,.4) {$b_1$};
	\node at (-.3,-.4) {$b_2$};
	\node at (2.3,.4) {$b_3$};
	\node at (1.7,-.4) {$b_4$};
	\node at (0,.7) {$a_1$};
	\node at (0,-.7) {$a_2$};
	\node at (2,.7) {$a_3$};
	\node at (2,-.7) {$a_4$};
	\node at (1,-1.4) {$(B)$};
	\end{tikzpicture}
	\captionsetup{labelfont={bf}, textfont={it}}
	\captionof{figure}{CW--complex Structure and regular CW--complex structure of $S^2 \vee S^2$}
	\label{e}
\end{center}

The fundamental group and homology group of $S^2 \vee S^2$ are
\[
\pi_1(S^2 \vee S^2) = 0, \qquad
H_n(S^2 \vee S^2) =
\begin{cases}
\mathbb{Z}, & n=0,\\[2mm]
0, & n=1,\\[1mm]
\mathbb{Z} \oplus \mathbb{Z}, & n=2,\\[1mm]
0, & n>2.
\end{cases}
\]
Hence $S^2 \vee S^2$ is simply connected and non-contractible.

The regular CW--complex structure (Fig. \ref{e} (B)) with three $0$-cells, four $1$-cells, and four $2$-cells gives a finite model with eleven points. Modifying this structure, we obtain a seven-point finite model with two $0$-cells, two $1$-cells, and three $2$-cells (Fig.~\ref{f}).
\begin{center}
	\begin{tikzpicture}
	\shade[ball color = pink!50, opacity = 2] (0,0) arc (180:360:1.2cm and 1.3cm);
	\draw (0cm,0) arc (180:360:1.2cm and 1.3cm);
	
	\shade[ball color = pink!50, opacity = 2] (2.4,0) arc (0:180:1.2cm and 1.3cm);
	\draw (2.4,0) arc (0:180:1.2 and 1.3);
	
	\draw [ball color = blue!30, opacity = 2][dashed] (1.2,0) ellipse (1.2cm and .3cm);
	
	\node at (-.2,0) {$c_1$};
	\node at (2.6,0) {$c_2$};
	\node at (1.7,0.5) {$b_1$};
	\node at (1.7,-0.5) {$b_2$};
	\node at (1,0) {$a_1$};
	\node at (1,-.8) {$a_2$};
	\node at (1,.8) {$a_3$};
	
	\end{tikzpicture}
	\begin{tikzpicture}
	[acteur/.style={circle, fill=black,thick, inner sep=2pt, minimum size=0.2cm}] 
	\node (1) at ( 4,0) [acteur,label=below:$c_1$]{};
	\node (2) at ( 5,0) [acteur,label=below:$c_2$]{};
	\node (3) at (4,1) [acteur][label=left:$b_1$]{};
	\node (4) at (5,1) [acteur][label=left:$b_2$]{};
	\node (5) at (4,2) [acteur][label=above:$a_1$]{};
	\node (6) at (5,2) [acteur][label=above:$a_2$]{};
	\node (7) at (6,2) [acteur][label=above:$a_3$]{};
	\draw [-, thick, red] (1) -- (3);
	\draw [-, thick, red] (1) -- (4);
	\draw [-, thick, red] (2) -- (3);
	\draw [-, thick, red] (2) -- (4);
	\draw [-, thick, red] (3) -- (5);
	\draw [-, thick, red] (3) -- (6);
	\draw [-, thick, red] (3) -- (7);
	\draw [-, thick, red] (4) -- (5);
	\draw [-, thick, red] (4) -- (6);
	\draw [-, thick, red] (4) -- (7);
	\end{tikzpicture}
	\captionsetup{labelfont={bf}, textfont={it}}
	\captionof{figure}{Modified regular CW--complex structure of $S^2\vee S^2$ and its associated finite space}
	\label{f}
\end{center}
Apart from that Fig. \ref{g} presents an additional seven-point finite model of $S^2\vee S^2$, whose order complex is homotopy equivalent to $S^2\vee S^2$.
\begin{center}
	\begin{tikzpicture}
	[acteur/.style={circle, fill=black,thick, inner sep=2pt, minimum size=0.2cm}] 
	\node (1) at ( 0,0) [acteur,label=below:$c_1$]{};
	\node (2) at ( 1,0) [acteur,label=below:$c_2$]{};
	\node (3) at (0,1) [acteur][label=left:$b_1$]{};
	\node (4) at (1,1) [acteur][label=left:$b_2$]{};
	\node (5) at (2,1) [acteur][label=left:$b_3$]{};
	\node (6) at (0,2) [acteur][label=above:$a_1$]{};
	\node (7) at (1,2) [acteur][label=above:$a_2$]{};
	\draw [-, thick, red] (1) -- (3);
	\draw [-, thick, red] (1) -- (4);
	\draw [-, thick, red] (1) -- (5);
	\draw [-, thick, red] (2) -- (3);
	\draw [-, thick, red] (2) -- (4);
	\draw [-, thick, red] (2) -- (5);
	\draw [-, thick, red] (3) -- (6);
	\draw [-, thick, red] (3) -- (7);
	\draw [-, thick, red] (4) -- (6);
	\draw [-, thick, red] (4) -- (7);
	\draw [-, thick, red] (5) -- (6);
	\draw [-, thick, red] (5) -- (7);
	\end{tikzpicture}
	\begin{tikzpicture}
	\shade[ball color = pink!50, opacity = 2] (2,1) arc (0:180:1cm and 1cm);
	\draw (2cm,1) arc (0:180:1cm and 1cm);
	\draw [ball color = blue!30, opacity = 2] (2,1) arc (0:180:1 and 0.3);
	\draw [ball color = gray!50, opacity = 2](0cm,1) arc (180:360:1cm and .3cm);
	\draw (0,1) -- (2,1); 
	\shade[ball color = pink!50, opacity = 2] (0,0) arc (180:360:1cm and 1cm);
	\draw (0cm,0) arc (180:360:1cm and 1cm);
	\draw [ball color = blue!30, opacity = 2] (2,0) arc (0:180:1 and 0.3);
	\draw [ball color = gray!50, opacity = 2](0cm,0) arc (180:360:1cm and .3cm);
	\draw (0,0) -- (2,0); 
	\draw[<->, dashed] (0,.2) -- (0,.8);
	\draw[<->, dashed] (2,.2) -- (2,.8);
	\fill (1,-.3) circle (1pt);
	\fill (1,0) circle (1pt);
	\fill (1,.3) circle (1pt);
	\fill (1,-1) circle (1pt);
	\fill (0,0) circle (1pt);
	\fill (2,0) circle (1pt);
	\fill (1,.7) circle (1pt);
	\fill (1,1) circle (1pt);
	\fill (1,1.3) circle (1pt);
	\fill (1,2) circle (1pt);
	\fill (0,1) circle (1pt);
	\fill (2,1) circle (1pt);
	\node at (1.2,-.3) {\tiny $b_1$};
	\node at (.8,0) {\tiny $b_2$};
	\node at (1.2,.3) {\tiny $b_3$};
	\node at (.8,-1) {\tiny $a_1$};
	\node at (-.2,0) {\tiny $c_1$};
	\node at (2.2,0) {\tiny $c_2$};
	\node at (.8,.7) {\tiny $b_1$};
	\node at (1.2,1) {\tiny $b_2$};
	\node at (.8,1.3) {\tiny $b_3$};
	\node at (1.2,2) {\tiny $a_2$};
	\node at (-.2,1) {\tiny $c_1$};
	\node at (2.2,1) {\tiny $c_2$};
	
	\end{tikzpicture}
	\begin{tikzpicture}
	\shade[ball color = pink!50, opacity = 2] (2,0) arc (0:180:1cm and 1cm);
	\shade[ball color = pink!50, opacity = 2] (0,0) arc (180:360:1cm and 1cm);
	\draw (0cm,0) arc (180:360:1cm and 1cm);
	\draw (2cm,0) arc (0:180:1cm and 1cm);
	\draw [ball color = pink!30, opacity = 2] (2,0) arc (0:180:1 and 0.3);
	\draw [ball color = pink!30, opacity = 2](0cm,0) arc (180:360:1cm and .3cm);
	\draw (0,0) -- (2,0); 
	
	\fill (1,-.3) circle (1pt);
	\fill (1,0) circle (1pt);
	\fill (1,.3) circle (1pt);
	\fill (1,-1) circle (1pt);
	\fill (0,0) circle (1pt);
	\fill (2,0) circle (1pt);
	\fill (1,1) circle (1pt);
	\node at (1.2,-.3) {\tiny $b_1$};
	\node at (.8,0) {\tiny $b_2$};
	\node at (1.2,.3) {\tiny $b_3$};
	\node at (.8,-1.1) {\tiny $a_1$};
	\node at (-.2,0) {\tiny $c_1$};
	\node at (2.2,0) {\tiny $c_2$};
	\node at (1.2,1.1) {\tiny $a_2$};
	\end{tikzpicture}
	\captionsetup{labelfont={bf}, textfont={it}}
	\captionof{figure}{Order complex of the given finite space}
	\label{g}
\end{center}

\subsubsection{Minimal Finite Models of $\mathbf{S^2\vee S^1}$ and $\mathbf{S^2\vee S^2}$}

\begin{remark}\label{9}
Any regular CW--structure of $S^{2}\vee S^{1}$ or $S^{2}\vee S^{2}$ has at least seven cells. Consequently, every finite model of these spaces must have at least seven points.  

In particular, every finite space with fewer than seven points is either contractible, non-connected, or weak homotopy equivalent to $S^{1}$, $S^{1}\vee S^{1}$, $S^{2}$, or $S^{1}\vee S^{1}\vee S^{1}$ \cite{Barmak(2011)}. Therefore, no finite model of $S^{2}\vee S^{1}$ or $S^{2}\vee S^{2}$ can have fewer than seven points, and hence all seven-point models of these spaces are minimal.
\end{remark}

\begin{theorem}\label{18}
Let $X$ be a minimal finite $T_{0}$--space with $|X|=7$ and $h(X)=2$. 
Then, up to homeomorphism, $X$ is one of the spaces depicted in Figures~\ref{d} and~\ref{h}. 
In particular:
\begin{enumerate}
    \item The minimal finite models of $S^{2}\vee S^{1}$ with seven points are precisely the two posets shown in Fig.~\ref{d};
    \begin{center}
		\begin{tikzpicture}
		[acteur/.style={circle, fill=black,thick, inner sep=2pt, minimum size=0.2cm}]
		\begin{scope}[shift={(0,0)}]
		\node (1) at (0,0) [acteur,label=below:$c_1$]{};
		\node (2) at (1,0) [acteur,label=below:$c_2$]{};
		\node (3) at (2,0) [acteur,label=below:$c_3$]{};
		\node (4) at (0,1) [acteur,label=left:$b_1$]{};
		\node (5) at (1,1) [acteur,label=left:$b_2$]{};
		\node (6) at (0.5,2) [acteur,label=above:$a_1$]{};
		\node (7) at (1.5,2) [acteur,label=above:$a_2$]{};
		\draw[-, thick, red] (1) -- (4);
		\draw[-, thick, red] (1) -- (5);
		\draw[-, thick, red] (2) -- (4);
		\draw[-, thick, red] (2) -- (5);
		\draw[-, thick, red] (3) -- (6);
		\draw[-, thick, red] (3) -- (7);
		\draw[-, thick, red] (4) -- (6);
		\draw[-, thick, red] (4) -- (7);
		\draw[-, thick, red] (5) -- (6);
		\draw[-, thick, red] (5) -- (7);
		\node at (1,-0.8) {(a)};
		\end{scope}
		\begin{scope}[shift={(6,0)}]
		\node (1) at ( 0.5,0) [acteur,label=below:$c_1$]{};
		\node (2) at ( 1.5,0) [acteur,label=below:$c_2$]{};
		\node (3) at ( 0,1) [acteur,label=left:$b_1$]{};
		\node (4) at (1,1) [acteur][label=left:$b_2$]{};
		\node (5) at (2,2) [acteur,label=above:$a_3$]{};
		\node (6) at (0,2) [acteur][label=above:$a_1$]{};
		\node (7) at (1,2) [acteur][label=above:$a_2$]{};
		\draw [-, thick, red] (1) -- (3);
		\draw [-, thick, red] (1) -- (4);
		\draw [-, thick, red] (1) -- (5);
		\draw [-, thick, red] (2) -- (3);
		\draw [-, thick, red] (2) -- (4);
		\draw [-, thick, red] (2) -- (5);
		\draw [-, thick, red] (3) -- (6);
		\draw [-, thick, red] (3) -- (7);
		\draw [-, thick, red] (4) -- (6);
		\draw [-, thick, red] (4) -- (7);
		\node at (1,-0.8) {(a$^\ast$) dual of (a)};
		\end{scope}
		\end{tikzpicture}
		\captionsetup{labelfont={bf}, textfont={it}}
		\captionof{figure}{Minimal finite models of $S^{2}\vee S^{1}$}
		\label{d}
	\end{center}
    \item The minimal finite models of $S^{2}\vee S^{2}$ with seven points are precisely the three posets shown in Fig.~\ref{h}.
    \begin{center}
\begin{tikzpicture}
[acteur/.style={circle, fill=black,thick, inner sep=2pt, minimum size=0.2cm}]

\begin{scope}[shift={(0,0)}]
\node (1d) at (0,2) [acteur,label=above:$a_1$]{};
\node (2d) at (1,2) [acteur,label=above:$a_2$]{};
\node (3d) at (0,1) [acteur,label=left:$b_1$]{};
\node (4d) at (1,1) [acteur,label=left:$b_2$]{};
\node (5d) at (0,0) [acteur,label=below:$c_1$]{};
\node (6d) at (1,0) [acteur,label=below:$c_2$]{};
\node (7d) at (2,0) [acteur,label=below:$c_3$]{};
\draw [-, thick, red] (1d)--(3d);
\draw [-, thick, red] (1d)--(4d);
\draw [-, thick, red] (2d)--(3d);
\draw [-, thick, red] (2d)--(4d);
\draw [-, thick, red] (3d)--(5d);
\draw [-, thick, red] (3d)--(6d);
\draw [-, thick, red] (3d)--(7d);
\draw [-, thick, red] (4d)--(5d);
\draw [-, thick, red] (4d)--(6d);
\draw [-, thick, red] (4d)--(7d);
\node at (1,-0.8) {(a)};
\end{scope}

\begin{scope}[shift={(5,0)}]
\node (1) at ( 0,0) [acteur,label=below:$c_1$]{};
\node (2) at ( 1,0) [acteur,label=below:$c_2$]{};
\node (3) at (0,1) [acteur,label=left:$b_1$]{};
\node (4) at (1,1) [acteur,label=left:$b_2$]{};
\node (5) at (0,2) [acteur,label=above:$a_1$]{};
\node (6) at (1,2) [acteur,label=above:$a_2$]{};
\node (7) at (2,2) [acteur,label=above:$a_3$]{};
\draw [-, thick, red] (1)--(3);
\draw [-, thick, red] (1)--(4);
\draw [-, thick, red] (2)--(3);
\draw [-, thick, red] (2)--(4);
\draw [-, thick, red] (3)--(5);
\draw [-, thick, red] (3)--(6);
\draw [-, thick, red] (3)--(7);
\draw [-, thick, red] (4)--(5);
\draw [-, thick, red] (4)--(6);
\draw [-, thick, red] (4)--(7);
\node at (1,-0.8) {(a$^\ast$) dual of (a)};
\end{scope}

\begin{scope}[shift={(10,0)}]
\node (1b) at (0,0) [acteur,label=below:$c_1$]{};
\node (2b) at (1,0) [acteur,label=below:$c_2$]{};
\node (4b) at (0,1) [acteur,label=left:$b_1$]{};
\node (5b) at (1,1) [acteur,label=left:$b_2$]{};
\node (3b) at (2,1) [acteur,label=below:$b_3$]{};
\node (6b) at (0,2) [acteur,label=above:$a_1$]{};
\node (7b) at (1,2) [acteur,label=above:$a_2$]{};
\draw [-, thick, red] (1b)--(3b);
\draw [-, thick, red] (1b)--(4b);
\draw [-, thick, red] (1b)--(5b);
\draw [-, thick, red] (2b)--(3b);
\draw [-, thick, red] (2b)--(4b);
\draw [-, thick, red] (2b)--(5b);
\draw [-, thick, red] (3b)--(6b);
\draw [-, thick, red] (3b)--(7b);
\draw [-, thick, red] (4b)--(6b);
\draw [-, thick, red] (4b)--(7b);
\draw [-, thick, red] (5b)--(6b);
\draw [-, thick, red] (5b)--(7b);
\node at (1,-0.8) {(b)};
\end{scope}
\end{tikzpicture}
\captionsetup{labelfont={bf}, textfont={it}}
\captionof{figure}{Minimal finite models of $S^{2}\vee S^{2}$}
\label{h}
\end{center}

\end{enumerate}
\end{theorem}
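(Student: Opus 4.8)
The plan is to regard $X$ as a finite poset and to study its order complex $\mathcal{K}(X)$, a $2$-dimensional simplicial complex on the $7$ vertices of $X$; since $X$ is weak homotopy equivalent to $\mathcal{K}(X)$ (see \cite{Barmak(2011)}), identifying the weak homotopy type of $X$ is the same as identifying $\mathcal{K}(X)$. First I would record the combinatorial consequences of the hypotheses. Because $X$ is minimal it has no beat points, so every maximal element covers at least two elements, every minimal element is covered by at least two elements, and every element that is neither maximal nor minimal both covers and is covered by at least two elements (otherwise some $\widehat{U}_{x}$ or $\widehat{F}_{x}$ would have a maximum or a minimum). Because $h(X)=2$, every maximal chain has exactly three elements, so the \emph{intermediate} elements (those that are neither maximal nor minimal) form an antichain, everything strictly below an intermediate element is minimal, everything strictly above one is maximal, and the only comparabilities are of the types minimal $<$ intermediate, minimal $<$ maximal, and intermediate $<$ maximal.

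Next I would pin down the \emph{level profile} $(p,q,r)$, where $p$, $q$, $r$ count the maximal, intermediate, and minimal elements, so that $p+q+r=7$. If $q=0$ then $h(X)\le 1$, hence $q\ge 1$; and a single intermediate element forces $p\ge 2$ and $r\ge 2$ by the covering bounds. Since the order-reversing duality $X\mapsto X^{\mathrm{op}}$ leaves $\mathcal{K}(X)$ unchanged and swaps $(p,q,r)$ with $(r,q,p)$, it suffices to consider the profiles $(2,1,4)$, $(3,1,3)$, $(2,2,3)$, and $(2,3,2)$. The profile $(2,1,4)$ is impossible: the unique intermediate element is forced to be covered by both maximal elements, so any minimal element lying below it is covered by it alone and is an up-beat point. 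For $(3,1,3)$ a short forced argument — the intermediate element must be covered by exactly two of the three maxima, and the remaining covering constraints then leave only two Hasse diagrams — shows that $\mathcal{K}(X)$ is a wedge of two or three circles; such $X$ are minimal but not weak homotopy equivalent to $S^{2}\vee S^{1}$ or $S^{2}\vee S^{2}$. Hence every $7$-point minimal model of $S^{2}\vee S^{1}$ or $S^{2}\vee S^{2}$ has profile $(2,2,3)$, $(3,2,2)$, or $(2,3,2)$.

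For each of these profiles I would enumerate the Hasse diagrams permitted by the covering bounds, using duality so that $(3,2,2)$ is just the mirror of $(2,2,3)$. In profile $(2,2,3)$ every intermediate element lies below both maxima, and a minimal element is either below both intermediate elements or is covered by both maxima and below no intermediate element; as the two intermediate elements together must cover at least two minima of the first kind, exactly two diagrams arise, namely those of Fig.~\ref{d}(a) and Fig.~\ref{h}(a). Profile $(2,3,2)$ is completely rigid — each intermediate element lies below both maxima and above both minima — giving the single diagram of Fig.~\ref{h}(b); dualizing, $(3,2,2)$ gives Fig.~\ref{d}(a$^{\ast}$) and Fig.~\ref{h}(a$^{\ast}$). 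It remains to identify the weak homotopy type of each of these five order complexes: those attached to Fig.~\ref{h} are $\Sigma^{2}$ of a three-point discrete space, hence $\simeq S^{2}\vee S^{2}$ (in particular simply connected with $H_{2}$ of rank $2$), while those attached to Fig.~\ref{d} are $\Sigma^{2}S^{0}$ with one arc attached joining two of its vertices, hence $\simeq S^{2}\vee S^{1}$; minimality of all five is automatic by Remark~\ref{9}. Together with the profile analysis this yields the full list and the two classifications.

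The main obstacle is the Hasse-diagram enumeration in the middle-heavy profiles $(2,2,3)$, $(3,2,2)$, and $(2,3,2)$: the danger is overlooking a diagram, and the enumeration is kept finite and verifiable only by organizing the case split around the covering pattern of the minimal elements and by invoking the duality $X\mapsto X^{\mathrm{op}}$ systematically. A secondary subtlety is that the Euler characteristic does not separate $S^{2}\vee S^{1}$ from $S^{2}\vee S^{2}$, so one genuinely needs $\pi_{1}$ or $H_{1}$; this is painless here only because each surviving order complex turns out to be a suspension, in one family with a single arc attached.
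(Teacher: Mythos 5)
Your proposal is correct and follows essentially the same route as the paper: rule out beat points, split into cases according to the sizes of the three levels (your profile $(p,q,r)$ is exactly the paper's case distinction on $\#\mathrm{mxl}(X)$, $\#B_X$, $\#\mathrm{mnl}(X)$), enumerate the admissible Hasse diagrams up to duality, and identify the order complexes of the survivors as $S^{2}\vee S^{1}$ or $S^{2}\vee S^{2}$, with minimality supplied by Remark~\ref{9}. Your handling of the one-intermediate-element profiles is in fact a little cleaner than the paper's figure-by-figure check, but the substance of the argument is the same.
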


\begin{proof}
By Remark~\ref{9}, the space $X$ has no beat points. Replacing $X$ by its opposite $X^{op}$ if necessary, we may assume that 
\[
\#\mathrm{mxl}(X)\leq \#\mathrm{mnl}(X).
\]
From Remark~\ref{15}, we know that $\#\mathrm{mxl}(X),\#\mathrm{mnl}(X)\geq 2$, and since $X$ is connected, we have $\mathrm{mxl}(X)\cap \mathrm{mnl}(X)=\varnothing$. 
Moreover, by Remark~\ref{16}, $h(X)=2$, hence $B_X\neq \emptyset$.

Since $|X|=7$, the only possibilities are 
\[
\#B_X \in \{1,2,3\}.
\]
We analyse these cases separately.

\medskip
\noindent\textbf{Case 1: $\#B_X=1$.} 

Let $B_X=\{b\}$. Then $(\#\mathrm{mxl}(X),\#\mathrm{mnl}(X))$ must be either $(2,4)$ or $(3,3)$, and by Remark~2.3 of \cite{Cianci-Ottina(2020)}, we have 
\[
\#(\hat{F}_b\cap \mathrm{mxl}(X))\ge 2 \quad\text{and}\quad 
\#(\hat{U}_b\cap \mathrm{mnl}(X))\ge 2.
\]
We illustrate the five possible configurations with these conditions in Fig.~\ref{q}. 
In every such case, either the resulting poset is disconnected, or it admits beat points or homotopy equivalent to $S^1$ (Fig.~\ref{q} (e)), contradicting the minimality of $X$. 
Therefore, no minimal finite model arises when $\#B_X=1$.
\begin{center}
\begin{tikzpicture}[scale=1, acteur/.style={circle, fill=black,thick, inner sep=1.8pt, minimum size=0.18cm}]
  \node (a1) at (0,2) [acteur,label=above:$a_1$]{};
  \node (a2) at (1.2,2) [acteur,label=above:$a_2$]{};
  \node (b)  at (0.75,1) [acteur,label=right:$b$]{};
  \node (c1) at (-0.5,0) [acteur,label=below:$c_1$]{};
  \node (c2) at (0.3,0)  [acteur,label=below:$c_2$]{};
  \node (c3) at (1.1,0)  [acteur,label=below:$c_3$]{};
  \node (c4) at (1.9,0)  [acteur,label=below:$c_4$]{};
  \draw[-, thick] (a1)--(b); 
  \draw[-, thick] (a2)--(b);
  \foreach \x in {c1,c2}{\draw[-, thick] (b)--(\x);}
  \node at (0.75,-0.8) {(a)};
  
  \node (a3) at (3.5,2) [acteur,label=above:$a_1$]{};
  \node (a4) at (4.5,2) [acteur,label=above:$a_2$]{};
  \node (b2) at (3.5,1) [acteur,label=left:$b$]{};
  \node (d1) at (2.7,0) [acteur,label=below:$c_1$]{};
  \node (d2) at (3.5,0) [acteur,label=below:$c_2$]{};
  \node (d3) at (4.3,0) [acteur,label=below:$c_3$]{};
  \node (d4) at (5.1,0) [acteur,label=below:$c_4$]{};
  \foreach \x in {a3,a4}{\draw[-, thick] (\x)--(b2);}
  \foreach \x in {d1,d2}{\draw[-, thick] (b2)--(\x);}
  \draw[-, thick] (a3)--(d3);
  \draw[-, thick] (a3)--(d4);
  \draw[-, thick] (a4)--(d3);
  \draw[-, thick] (a4)--(d4);
  \node at (3.9,-0.8) {(b)};
  
  \node (a6) at (5.9,2) [acteur,label=above:$a_1$]{};
  \node (a7) at (6.7,2) [acteur,label=above:$a_2$]{};
  \node (a8) at (7.5,2) [acteur,label=above:$a_3$]{};
  \node (b3) at (6.7,1) [acteur,label=right:$b$]{};
  \node (e1) at (5.9,0) [acteur,label=below:$c_1$]{};
  \node (e2) at (6.7,0) [acteur,label=below:$c_2$]{};
  \node (e3) at (7.5,0) [acteur,label=below:$c_3$]{};
  \foreach \x in {a6,a7}{\draw[-, thick] (\x)--(b3);}
  \foreach \x in {e1,e2}{\draw[-, thick] (b3)--(\x);}
  \node at (6.7,-0.8) {(c)};
  
  \node (a9) at (8.3,2) [acteur,label=above:$a_1$]{};
  \node (a10) at (9.1,2) [acteur,label=above:$a_2$]{};
  \node (a11) at (9.9,2) [acteur,label=above:$a_3$]{};
  \node (b4) at (9.1,1) [acteur,label=right:$b$]{};
  \node (f1) at (8.3,0) [acteur,label=below:$c_1$]{};
  \node (f2) at (9.1,0) [acteur,label=below:$c_2$]{};
  \node (f3) at (9.9,0) [acteur,label=below:$c_3$]{};
  \foreach \x in {a9,a10,a11}{\draw[-, thick] (\x)--(b4);}
  \foreach \x in {f1,f2,f3}{\draw[-, thick] (b4)--(\x);}
  \node at (9.1,-0.8) {(d)};
  
  \node (a12) at (10.7,2) [acteur,label=above:$a_1$]{};
  \node (a13) at (11.5,2) [acteur,label=above:$a_2$]{};
  \node (a14) at (12.3,2) [acteur,label=above:$a_3$]{};
  \node (b5) at (11,1) [acteur,label=right:$b$]{};
  \node (g1) at (10.7,0) [acteur,label=below:$c_1$]{};
  \node (g2) at (11.5,0) [acteur,label=below:$c_2$]{};
  \node (g3) at (12.3,0) [acteur,label=below:$c_3$]{};
  \draw[-, thick] (a14)--(g1);
  \draw[-, thick] (a14)--(g2);
  \draw[-, thick] (a12)--(g3);
  \draw[-, thick] (a13)--(g3);
  \foreach \x in {a12,a13}{\draw[-, thick] (\x)--(b5);}
  \foreach \x in {g1,g2}{\draw[-, thick] (b5)--(\x);}
  \node at (11.5,-0.8) {(e) $S^1$};
\end{tikzpicture}

\captionsetup{labelfont={bf}, textfont={it}}
\captionof{figure}{Five possible configurations with $\#B_X=1$}
\label{q}
\end{center}

\medskip
\noindent\textbf{Case 2: $\#B_X=2$.}

In this situation, we must have $\#\mathrm{mxl}(X)=2$ and $\#\mathrm{mnl}(X)=3$.  
Let 
\[
\mathrm{mxl}(X)=\{a_1,a_2\}, \quad B_X=\{b_1,b_2\}, \quad \mathrm{mnl}(X)=\{c_1,c_2,c_3\}.
\]
Since $h(X)=2$, the elements of $B_X$ form an antichain.  
By Remark~2.3(2) of \cite{Cianci-Ottina(2020)}, each $b_i$ must be connected to both maximal elements, that is, for $i=1,2$:
\[
\#(\hat{F}_{b_i}\cap \mathrm{mxl}(X))=2,
\]
Moreover,
\[
\alpha_i=\#(\hat{U}_{b_i}\cap \mathrm{mnl}(X))\in\{2,3\}.
\]
A detail analysis of all possibilities for $(\alpha_1,\alpha_2)$, ensuring connectedness and absence of beat points gives the following configurations:
\begin{enumerate}
    \item When $(\alpha_1,\alpha_2)=(2,2)$ with $\#(\hat{U}_{b_1}\cap\hat{U}_{b_2})=2$ such that $\hat{U}_{b_1}\cap\hat{U}_{b_2}=\{c_1, c_2\}$ and $\hat{F}_{c_3}\cap\mathrm{mxl}(X) = \{a_1, a_2\}$, we obtain a space representing $S^2\vee S^1$ (Fig.~\ref{d} (a), dual is ($a^{\ast}$).
    \item When $(\alpha_1,\alpha_2)=(3,3)$, each $b_i$ is connected to all three minimal points. This configuration gives a minimal finite model of $S^2\vee S^2$ (Fig.~\ref{h} (a), dual is ($a^{\ast}$)).
\end{enumerate}

Hence, Case 2 gives minimal finite models of two different homotopy types: $S^2\vee S^1$ and $S^2\vee S^2$.

\medskip
\noindent\textbf{Case 3: $\#B_X=3$.}

In this case, $\#\mathrm{mxl}(X)=\#\mathrm{mnl}(X)=2$.  
Let
\[
\mathrm{mxl}(X)=\{a_1,a_2\}, \quad B_X=\{b_1,b_2,b_3\}, \quad \mathrm{mnl}(X)=\{c_1,c_2\}.
\]
Since $B_X$ is an antichain, by the same reasoning as above,
\[
\#(\hat{F}_{b_i}\cap \mathrm{mxl}(X))=\#(\hat{U}_{b_i}\cap \mathrm{mnl}(X))=2,\quad i=1,2,3.
\]
The only connected, beat-point-free configuration satisfying these conditions represents a minimal finite model of $S^2\vee S^2$ ( Fig.~\ref{h} (b)).

\medskip
Combining all the above cases, and noting that the dual of any minimal finite model is also minimal and represents the same homotopy type, we conclude that, up to duality, the minimal finite models of $S^{2}\vee S^{1}$ and $S^{2}\vee S^{2}$ are exactly those displayed in Figures~\ref{d} and~\ref{h}.
\end{proof}

\subsection{Minimal Finite Models of $\mathbf{S^{1}\vee S^{1}\vee S^{2}}$, $\mathbf{S^{1}\vee S^{1}\vee S^{1}\vee S^{2}}$, $\mathbf{S^{1}\vee S^{2}\vee S^{2}}$, $\mathbf{S^{2}\vee S^{2}\vee S^{2}}$ and $\mathbf{S^{2}\vee S^{2}\vee S^{2}\vee S^{2}}$}

\subsubsection{$\mathbf{S^{1}\vee S^{1}\vee S^{2}}$}

The space $S^1 \vee S^1 \vee S^2$ is the wedge sum of two circles and a $2$-sphere at a common base point (Fig.~\ref{i}).
\begin{center}
\begin{tikzpicture}[scale=1]
    \draw (0,0) circle (1cm);
    \shade[ball color = blue!40, opacity = 0.4] (2,0) circle (1cm);
    \draw (2,0) circle (1cm);
    \draw (-.5,0) circle (1.5cm);

    \fill (1,0) circle (1.5pt);

    \node at (1.2,0) {$c_1$};
    \node at (-1.8,0) {$b_2$};
    \node at (-.8,0) {$b_1$};
    \node at (2,0) {$a_1$};
    \node at (0.6,-1.9) {$(A)$};
\end{tikzpicture}
\hspace{.7cm} 
\begin{tikzpicture}[scale=1]
    \draw (0,0) circle (1cm);
    \shade[ball color = blue!40, opacity = 0.4] (2,0) circle (1cm);
    \draw (2,0) circle (1cm);
    \draw (1,0) arc (180:360:1 and 0.3);
    \draw[dashed] (3,0) arc (0:180:1 and 0.3);
    \draw (-.5,0) circle (1.5cm);

    \fill (1,0) circle (1.5pt);
    \fill (-1,0) circle (1.5pt);
    \fill (-2,0) circle (1.5pt);

    \node at (1.2,0) {$c_1$};
    \node at (3.2,0) {$c_2$};
    \node at (-.7,0) {$c_3$};
    \node at (-1.7,0) {$c_4$};
    \node at (2.3,.4) {$b_1$};
    \node at (1.7,-.4) {$b_2$};
    \node at (-.5,1.1) {$b_3$};
    \node at (-.5,-1.1) {$b_4$};
    \node at (-.8,1.7) {$b_5$};
    \node at (-.8,-1.7) {$b_6$};
    \node at (2,.7) {$a_1$};
    \node at (2,-.7) {$a_2$};
    \node at (1,-1.9) {$(B)$};
\end{tikzpicture}

\captionsetup{labelfont={bf}, textfont={it}}
\captionof{figure}{CW--complex structure and regular CW--complex structure of $S^1 \vee S^1 \vee S^2$}
\label{i}
\end{center}

Its fundamental group is
\[
\pi_1(S^1 \vee S^1 \vee S^2) \cong \mathbb{Z} \ast \mathbb{Z},
\]
and its homology groups are
\[
H_n(S^1\vee S^1 \vee S^2) = 
\begin{cases}
\mathbb{Z} & n = 0, 2,\\
\mathbb{Z} \oplus \mathbb{Z} & n = 1,\\
0 & \text{otherwise.}
\end{cases}
\]
A regular CW--complex (Fig.~\ref{i} (B)) with four $0$-cells, six $1$-cells, and two $2$-cells yields a finite model with twelve points. Modifying this structure produces an eight-point minimal finite model with two $0$-cells, four $1$-cells, and two $2$-cells (Fig.~\ref{j}) and its dual.
\begin{center}
	\begin{tikzpicture}
	\shade[ball color = pink!50, opacity = 2] (0,0) arc (180:360:1.2cm and 1.3cm);
	\draw (0cm,0) arc (180:360:1.2cm and 1.3cm);
	
	\draw [ball color = blue!30, opacity = 2][dashed] (1.2,0) ellipse (1.2cm and .3cm);
	\draw (2.4,0) arc (0:180:1.2 and 1.9);
	\draw (2.4,0) arc (0:180:1.2 and 2.6);
	
	\node at (-.2,0) {$c_1$};
	\node at (2.6,0) {$c_2$};
	\node at (1.7,0.5) {$b_1$};
	\node at (1.7,-0.5) {$b_2$};
	\node at (1,1.7) {$a_3$};
	\node at (1,2.8) {$a_4$};
	\node at (1,0) {$a_1$};
	\node at (1,-.8) {$a_2$};
	
	\end{tikzpicture}
	\begin{tikzpicture}
	[acteur/.style={circle, fill=black,thick, inner sep=2pt, minimum size=0.2cm}] 
	\node (1) at ( 0,0) [acteur,label=below:$c_1$]{};
	\node (2) at ( 1,0) [acteur,label=below:$c_2$]{};
	\node (3) at (-1,1) [acteur][label=left:$b_1$]{};
	\node (4) at (0,1) [acteur][label=left:$b_2$]{};
	\node (5) at ( 1,2) [acteur,label=above:$a_3$]{};
	\node (6) at ( 2,2) [acteur,label=above:$a_4$]{};
	\node (7) at (-1,2) [acteur][label=above:$a_1$]{};
	\node (8) at (0,2) [acteur][label=above:$a_2$]{};
	\draw [-, thick, red] (1) -- (3);
	\draw [-, thick, red] (1) -- (4);
	\draw [-, thick, red] (1) -- (5);
	\draw [-, thick, red] (1) -- (6);
	\draw [-, thick, red] (2) -- (3);
	\draw [-, thick, red] (2) -- (4);
	\draw [-, thick, red] (2) -- (5);
	\draw [-, thick, red] (2) -- (6);
	\draw [-, thick, red] (3) -- (7);
	\draw [-, thick, red] (3) -- (8);
	\draw [-, thick, red] (4) -- (7);
	\draw [-, thick, red] (4) -- (8);
	
	\end{tikzpicture}
	\captionsetup{labelfont={bf}, textfont={it}}
	\captionof{figure}{Modified regular CW--complex structure of $S^1\vee S^1\vee S^2$ and its associated finite space}
	\label{j}
\end{center}

\subsubsection{$\mathbf{S^1 \vee S^2 \vee S^2}$}

The space $S^1 \vee S^2 \vee S^2$ is the wedge sum of two $2$-spheres and one circle, obtained by identifying a single base point in each copy. To visualize it, imagine two spheres and a circle placed side by side, with one point on each identified to form a connection (see Fig.~\ref{l}, first diagram).  

\begin{center}
	\begin{tikzpicture}
	\shade[ball color = blue!40, opacity = 0.4] (0,0) circle (1cm);
	\draw (0,0) circle (1cm);
	\shade[ball color = blue!40, opacity = 0.4] (2,0) circle (1cm);
	\draw (2,0) circle (1cm);
	\draw (-.5,0) circle (1.5cm);
	
	\fill (1,0) circle (1.5pt);
	
	\node at (1.2,0) {$c_1$};
	\node at (-1.8,0) {$b_1$};
	\node at (0,0) {$a_1$};
	\node at (2,0) {$a_2$};
	\node at (0.6,-1.9) {$(A)$};
	\end{tikzpicture}
    \hspace{.7cm} 
	\begin{tikzpicture}
	\shade[ball color = blue!40, opacity = 0.4] (0,0) circle (1cm);
	\draw (0,0) circle (1cm);
	\draw (-1,0) arc (180:360:1 and 0.3);
	\draw[dashed] (1,0) arc (0:180:1 and 0.3);
	\shade[ball color = blue!40, opacity = 0.4] (2,0) circle (1cm);
	\draw (2,0) circle (1cm);
	\draw (1,0) arc (180:360:1 and 0.3);
	\draw[dashed] (3,0) arc (0:180:1 and 0.3);
	\draw (-.5,0) circle (1.5cm);
	
	\fill (1,0) circle (1.5pt);
	\fill (-2,0) circle (1.5pt);
	\fill (-1,0) circle (1.5pt);
	\fill (3,0) circle (1.5pt);
	\node at (-1.2,0) {$c_1$};
	\node at (1.3,0) {$c_2$};
	\node at (3.3,0) {$c_3$};
	\node at (.3,.4) {$b_1$};
	\node at (-.3,-.4) {$b_2$};
	\node at (2.3,.4) {$b_3$};
	\node at (1.7,-.4) {$b_4$};
	\node at (-1,1.1) {$b_5$};
	\node at (-1,-1.1) {$b_6$};
	\node at (-1.7,0) {$c_4$};
	\node at (0,.7) {$a_1$};
	\node at (0,-.7) {$a_2$};
	\node at (2,.7) {$a_3$};
	\node at (2,-.7) {$a_4$};
    \node at (1,-1.9) {$(B)$};
	\end{tikzpicture}
	\captionsetup{labelfont={bf}, textfont={it}}
	\captionof{figure}{CW--complex structure and regular CW--complex structure of $S^1 \vee S^2\vee S^2$}
	\label{l}
\end{center}
We first recall the basic invariants of $S^1 \vee S^2\vee S^2$.  
By the Seifert–van Kampen theorem,
\[
\pi_1(S^1 \vee S^2\vee S^2) \cong \pi_1(S^1) \cong \mathbb{Z}.
\]
Applying the Mayer–Vietoris sequence to neighborhoods of each sphere gives
\[
H_n(S^1 \vee S^2\vee S^2) \cong
\begin{cases}
\mathbb{Z}, & n=0,1, \\
\mathbb{Z}\oplus \mathbb{Z}, & n=2, \\
0, & \text{otherwise}.
\end{cases}
\]
A regular CW--complex structure for $S^1 \vee S^2\vee S^2$ is shown in Fig.~\ref{l} (B), which yields a finite model with 14 points.  
A modified CW--structure, shown in Fig.~\ref{m}, produces an 8-point model, and together with its dual, these give two distinct minimal finite models of $S^1 \vee S^2\vee S^2$.
\begin{center}
	\begin{tikzpicture}
	\shade[ball color = pink!50, opacity = 2] (0,0) arc (180:360:1.2cm and 1.3cm);
	\draw (0cm,0) arc (180:360:1.2cm and 1.3cm);
	
	\shade[ball color = pink!50, opacity = 2] (2.4,0) arc (0:180:1.2cm and 1.3cm);
	\draw (2.4,0) arc (0:180:1.2 and 1.3);
	
	\draw [ball color = blue!30, opacity = 2][dashed] (1.2,0) ellipse (1.2cm and .3cm);
	
	\draw (2.4,0) arc (0:180:1.2 and 1.9);
	
	\node at (-.2,0) {$c_1$};
	\node at (2.6,0) {$c_2$};
	\node at (1.7,0.5) {$b_1$};
	\node at (1.7,-0.5) {$b_2$};
	\node at (1,2) {$a_4$};
	\node at (1,0) {$a_1$};
	\node at (1,-.8) {$a_2$};
	\node at (1,.8) {$a_3$};
	\end{tikzpicture}
	\begin{tikzpicture}
	[acteur/.style={circle, fill=black,thick, inner sep=2pt, minimum size=0.2cm}] 
	\node (1) at ( 0,0) [acteur,label=below:$c_1$]{};
	\node (2) at ( 1,0) [acteur,label=below:$c_2$]{};
	\node (3) at (0,1) [acteur][label=left:$b_1$]{};
	\node (4) at (1,1) [acteur][label=left:$b_2$]{};
	\node (5) at (3,2) [acteur,label=above:$a_4$]{};
	\node (6) at ( 0,2) [acteur,label=above:$a_1$]{};
	\node (7) at (1,2) [acteur][label=above:$a_2$]{};
	\node (8) at (2,2) [acteur][label=above:$a_3$]{};
	\draw [-, thick, red] (1) -- (3);
	\draw [-, thick, red] (1) -- (4);
	\draw [-, thick, red] (1) -- (5);
	\draw [-, thick, red] (2) -- (3);
	\draw [-, thick, red] (2) -- (4);
	\draw [-, thick, red] (2) -- (5);
	\draw [-, thick, red] (3) -- (6);
	\draw [-, thick, red] (3) -- (7);
	\draw [-, thick, red] (3) -- (8);
	\draw [-, thick, red] (4) -- (6);
	\draw [-, thick, red] (4) -- (7);
	\draw [-, thick, red] (4) -- (8);
	\end{tikzpicture}
	\captionsetup{labelfont={bf}, textfont={it}}
	\captionof{figure}{Modified regular CW--complex structure of $S^1 \vee S^2\vee S^2$ and its associated finite space}
	\label{m}
\end{center}
Additionally, Fig.~\ref{1`} exhibits an another minimal finite model of $S^1 \vee S^2\vee S^2$, which is weak homotopy equivalent to the original space via the geometric realization of its order complex.
\begin{center}
\begin{tikzpicture}[acteur/.style={circle, fill=black, thick, inner sep=2pt, minimum size=0.2cm}]

\node (1) at (0,0) [acteur,label=below:$c_1$]{};
\node (2) at (1,0) [acteur,label=below:$c_2$]{};
\node (3) at (0,1) [acteur,label=left:$b_1$]{};
\node (4) at (1,1) [acteur,label=left:$b_2$]{};
\node (5) at (2,1) [acteur,label=left:$b_3$]{};
\node (8) at (2.5,2) [acteur,label=above:$a_3$]{};
\node (6) at (0,2) [acteur,label=above:$a_1$]{};
\node (7) at (1,2) [acteur,label=above:$a_2$]{};
\draw[-, thick, red] (1)--(3) (1)--(4) (1)--(5) (2)--(3) (2)--(4) (2)--(5);
\draw[-, thick, red] (3)--(6) (3)--(7) (4)--(6) (4)--(7) (5)--(6) (5)--(7);
\draw[-, thick, red] (1)--(8) (2)--(8);
\node at (1,-0.8) {(a)};
\end{tikzpicture}
\captionsetup{labelfont={bf}, textfont={it}}
\captionof{figure}{}
\label{1`}
\end{center}
\subsubsection{$\mathbf{S^2 \vee S^2 \vee S^2}$}
The space $S^2 \vee S^2 \vee S^2$ is the wedge sum of three $2$-spheres, obtained by identifying a single common base point in each copy.
\begin{center}
	\begin{minipage}{0.45\textwidth}
		\centering
		\begin{tikzpicture}
		\shade[ball color = pink!30, opacity = 0.35] (-1,0.3) circle (1.3cm);
		\draw (-1,0.3) circle (1.3cm);
		
		\shade[ball color = blue!60, opacity = 0.35] (1,0.3) circle (1.3cm);
		\draw (1,0.3) circle (1.3cm);
		
		\shade[ball color = green!40, opacity = 0.35] (0,-1) circle (1.3cm);
		\draw (0,-1) circle (1.3cm);
		
		\fill[black] (0,0) circle (2pt);
		\draw[white, line width=0.4pt] (0,0) circle (2.2pt); 
		\node[below=2pt, align=center] at (0.25,0.2) {\scriptsize $c_1$ \\ \tiny \textit{The wedge point}};

		\node at (-1.6,0.6) {$a_1$};
		\node at (1.6,0.6) {$a_2$};
		\node at (0,-1.6) {$a_3$};
		
		\node at (0,-3) {$(A)$};
		\end{tikzpicture}
		
	\end{minipage}
	\hspace{1cm} 
	\begin{minipage}{0.45\textwidth}
		\centering
		\begin{tikzpicture}
		\shade[ball color = pink!50, opacity = 0.35] (-0.5,0.25) circle (1.5cm);
		\draw (-0.5,0.25) circle (1.5cm);
		\draw (-2,0.25) arc (180:360:1.5 and 0.45);
		\draw[dashed] (1,0.25) arc (0:180:1.5 and 0.45);
		
		\shade[ball color = blue!40, opacity = 0.4] (0,-0.15) circle (1cm);
		\draw (0,-0.15) circle (1cm);
		\draw (-1,-0.15) arc (180:360:1 and 0.3);
		\draw[dashed] (1,-0.15) arc (0:180:1 and 0.3);
		
		\shade[ball color = cyan!50, opacity = 0.4] (2,0) circle (1cm);
		\draw (2,0) circle (1cm);
		\draw (1,0) arc (180:360:1 and 0.3);
		\draw[dashed] (3,0) arc (0:180:1 and 0.3);
		
		\fill (1,0) circle (1.5pt);
		\fill (-2,0.25) circle (1.5pt);
		\fill (-1,-.2) circle (1.5pt);
		\fill (3,0) circle (1.5pt);
		
		\node at (-1.2,-0.3) {$c_2$};
		\node at (1.3,0) {$c_1$};
		\node at (3.3,0) {$c_3$};
		\node at (.3,.2) {$b_1$};
		\node at (-.3,-.4) {$b_2$};
		\node at (2.3,.3) {$b_3$};
		\node at (1.8,-.3) {$b_4$};
		\node at (-1.2,.7) {$b_5$};
		\node at (-1.6,0) {$b_6$};
		\node at (-2.3,0.3) {$c_4$};
		\node at (-.1,.6) {$a_1$};
		\node at (0,-.8) {$a_2$};
		\node at (2,.7) {$a_3$};
		\node at (2,-.7) {$a_4$};
		\node at (-.7,1.1) {$a_5$};
		\node at (-.9,-1) {$a_6$};
		
		\node at (1,-1.9) {$(B)$};
		\end{tikzpicture}
		
	\end{minipage}
	
	\captionsetup{labelfont={bf}, textfont={it}}
	\captionof{figure}{CW--complex Structure and regular CW--complex structure of $S^2 \vee S^2 \vee S^2$}
	\label{l}
\end{center}

Its fundamental group is trivial since each sphere is simply connected and the van Kampen theorem applies directly. By the Mayer–Vietoris sequence, its homology groups are given by  
\[
H_n(S^2 \vee S^2 \vee S^2) =
\begin{cases}
\mathbb{Z} & n=0, \\
\mathbb{Z} \oplus \mathbb{Z} \oplus \mathbb{Z} & n=2, \\
0 & \text{otherwise}.
\end{cases}
\]
Thus the space is non-contractible and distinguished by its second homology.
A regular CW--complex structure for $S^2 \vee S^2\vee S^2$ is shown in Fig.~\ref{l`} (B), which yields a finite model with 15 points.  
A modified CW-structure, shown in Fig.~\ref{m`}, produces an 8-point model, and together with its dual, these give two distinct minimal finite models of $S^2 \vee S^2\vee S^2$.
\begin{center}
	\begin{tikzpicture}
	\shade[ball color = red!30, opacity = 2] (0,0) arc (180:360:1.2cm and 1.3cm);
	\draw (0cm,0) arc (180:360:1.2cm and 1.3cm);

    \shade[ball color = pink!50, opacity = 2] (2.4,0) arc (0:180:1.2cm and 2.2cm);
	\draw (2.4,0) arc (0:180:1.2 and 2.2);

    \shade[ball color = red!30, opacity = 2] (2.4,0) arc (0:180:1.2cm and 1.3cm);
	\draw (2.4,0) arc (0:180:1.2 and 1.3);

    \draw [ball color = blue!30, opacity = 2][dashed] (1.2,0) ellipse (1.2cm and .3cm);
	
	\node at (-.2,0) {$c_1$};
	\node at (2.6,0) {$c_2$};
	\node at (1.7,0.5) {$b_1$};
	\node at (1.7,-0.5) {$b_2$};
	\node at (1.2,1.7) {$a_4$};
	\node at (1,0) {$a_1$};
	\node at (1,-.8) {$a_2$};
	\node at (1,.8) {$a_3$};
	\end{tikzpicture}
	\begin{tikzpicture}
[acteur/.style={circle, fill=black,thick, inner sep=2pt, minimum size=0.2cm}] 

\node (1) at (4.5,0) [acteur,label=below:$c_1$]{};
\node (2) at (5.5,0) [acteur,label=below:$c_2$]{};
\node (3) at (4.5,1) [acteur,label=left:$b_1$]{};
\node (4) at (5.5,1) [acteur,label=right:$b_2$]{};
\node (5) at (4.5,2) [acteur,label=above:$a_1$]{};
\node (6) at (5.5,2) [acteur,label=above:$a_2$]{};
\node (7) at (6.5,2) [acteur,label=above:$a_3$]{};
\node (8) at (7.5,2) [acteur,label=above:$a_4$]{};

\draw[-, thick, red] (1)--(3);
\draw[-, thick, red] (1)--(4);
\draw[-, thick, red] (2)--(3);
\draw[-, thick, red] (2)--(4);

\draw[-, thick, red] (3)--(5);
\draw[-, thick, red] (3)--(6);
\draw[-, thick, red] (3)--(7);
\draw[-, thick, red] (3)--(8);
\draw[-, thick, red] (4)--(5);
\draw[-, thick, red] (4)--(6);
\draw[-, thick, red] (4)--(7);
\draw[-, thick, red] (4)--(8);

\end{tikzpicture}
	\captionsetup{labelfont={bf}, textfont={it}}
	\captionof{figure}{Modified regular CW--complex structure of $S^2 \vee S^2\vee S^2$ and its associated finite space}
	\label{m`}
\end{center}
Apart from that Figure \ref{g`} presents an additional finite model of $S^2\vee S^2$, whose order complex is homotopy equivalent to $S^2\vee S^2$.
\begin{center}
		\begin{tikzpicture}
		[acteur/.style={circle, fill=black,thick, inner sep=2pt, minimum size=0.2cm}] 
		\node (1) at (0,0) [acteur,label=below:$c_1$]{};
  \node (2) at (1,0) [acteur,label=below:$c_2$]{};
  \node (3) at (0,1) [acteur,label=left:$b_1$]{};
  \node (4) at (1,1) [acteur,label=left:$b_2$]{};
  \node (5) at (2,1) [acteur,label=left:$b_3$]{};
  \node (8) at (3,1) [acteur,label=left:$b_4$]{};
  \node (6) at (0,2) [acteur,label=above:$a_1$]{};
  \node (7) at (1,2) [acteur,label=above:$a_2$]{};
  \draw[-, thick, red] (1) -- (3);
  \draw[-, thick, red] (1) -- (4);
  \draw[-, thick, red] (1) -- (5);
  \draw[-, thick, red] (2) -- (3);
  \draw[-, thick, red] (2) -- (4);
  \draw[-, thick, red] (2) -- (5);
  \draw[-, thick, red] (3) -- (6);
  \draw[-, thick, red] (3) -- (7);
  \draw[-, thick, red] (4) -- (6);
  \draw[-, thick, red] (4) -- (7);
  \draw[-, thick, red] (5) -- (6);
  \draw[-, thick, red] (5) -- (7);
  \draw[-, thick, red] (8) -- (6);
  \draw[-, thick, red] (8) -- (7);
  \draw[-, thick, red] (8) -- (1);
  \draw[-, thick, red] (8) -- (2);
\end{tikzpicture}
\captionsetup{labelfont={bf}, textfont={it}}
\captionof{figure}{}
\label{g`}
\end{center}
\subsubsection{Minimal Finite Model of the Spaces $\mathbf{S^1 \vee S^1 \vee S^2}$, $\mathbf{S^1 \vee S^1 \vee S^1 \vee S^2}$, $\mathbf{S^1 \vee S^2 \vee S^2}$ and $\mathbf{S^2 \vee S^2 \vee S^2}$}

\begin{remark}\label{18`}
	As established in Remark~\ref{9} and Theorem~\ref{18}, all finite spaces with fewer than eight points have been completely classified. 
	Consequently, any finite model of the spaces $S^{1}\vee S^{1}\vee S^{2}$, $S^{1}\vee S^{2}\vee S^{2}$, and $S^{2}\vee S^{2}\vee S^{2}$ must contain at least eight points.
\end{remark}

We now classify all minimal finite spaces of height $2$ with eight points. 
In particular, we observe that the spaces $S^{1}\vee S^{1}\vee S^{1}\vee S^{2}$ and $S^{2}\vee S^{2}\vee S^{2}\vee S^{2}$ also admit finite models with eight points.
\begin{theorem}\label{19}
	Let $X$ be a minimal finite $T_{0}$--space with $|X|=8$ and $h(X)=2$. Then, up to homeomorphism, $X$ is one of the spaces depicted in Figures~\ref{n}, ~\ref{o}, ~\ref{k}, ~\ref{q}, ~\ref{l`}, ~\ref{h`} and ~\ref{p}:

	\begin{minipage}{\linewidth}
	\begin{center}
		\begin{tikzpicture}
		[acteur/.style={circle, fill=black,thick, inner sep=2pt, minimum size=0.2cm}]
		\begin{scope}[shift={(0,0)}]
		\node (1) at (0,0) [acteur,label=below:$c_1$]{};
		\node (2) at (1,0) [acteur,label=below:$c_2$]{};
		\node (3) at (2,0) [acteur,label=below:$c_3$]{};
		\node (4) at (3,0) [acteur,label=below:$c_4$]{};
		\node (5) at (1,1) [acteur,label=left:$b_1$]{};
		\node (6) at (0.5,2) [acteur,label=above:$a_1$]{};
		\node (7) at (1.5,2) [acteur,label=above:$a_2$]{};
		\node (8) at (2.5,2) [acteur,label=above:$a_3$]{};
		\draw[-, thick, red] (1)--(6) (1)--(7) (2)--(7) (2)--(8);
		\draw[-, thick, red] (3)--(5) (3)--(8) (4)--(5) (4)--(8);
		\draw[-, thick, red] (5)--(6) (5)--(7);
		\node at (1.5,-0.8) {(a) $S^{1}\vee S^{1}\vee S^{1}$};
		\end{scope}
		
		\begin{scope}[shift={(5,0)}]
		\node (1) at (0,2) [acteur,label=above:$c_1$]{};
		\node (2) at (1,2) [acteur,label=above:$c_2$]{};
		\node (3) at (2,2) [acteur,label=above:$c_3$]{};
		\node (4) at (3,2) [acteur,label=above:$c_4$]{};
		\node (5) at (1,1) [acteur,label=right:$b_1$]{};
		\node (6) at (0.5,0) [acteur,label=below:$a_1$]{};
		\node (7) at (1.5,0) [acteur,label=below:$a_2$]{};
		\node (8) at (2.5,0) [acteur,label=below:$a_3$]{};
		\draw[-, thick, red] (1)--(6) (1)--(7) (2)--(7) (2)--(8);
		\draw[-, thick, red] (3)--(5) (3)--(8) (4)--(5) (4)--(8);
		\draw[-, thick, red] (5)--(6) (5)--(7);
		\node at (1.5,-0.8) {(a$^\ast$) Dual of (a)};
		\end{scope}
		
		\begin{scope}[shift={(10,0)}]
		\node (1) at (0,0) [acteur,label=below:$c_1$]{};
		\node (2) at (1,0) [acteur,label=below:$c_2$]{};
		\node (3) at (2,0) [acteur,label=below:$c_3$]{};
		\node (4) at (3,0) [acteur,label=below:$c_4$]{};
		\node (5) at (1,1) [acteur,label=left:$b_1$]{};
		\node (6) at (0.5,2) [acteur,label=above:$a_1$]{};
		\node (7) at (1.5,2) [acteur,label=above:$a_2$]{};
		\node (8) at (2.5,2) [acteur,label=above:$a_3$]{};
		\draw[-, thick, red] (1)--(8) (1)--(7) (2)--(6) (2)--(8);
		\draw[-, thick, red] (3)--(5) (3)--(8) (4)--(5) (4)--(8);
		\draw[-, thick, red] (5)--(6) (5)--(7);
		\node at (1.5,-0.8) {(b) $S^{1}\vee S^{1}\vee S^{1}$};
		\end{scope}
		
		\begin{scope}[shift={(0,-5)}]
		\node (1) at (0,2) [acteur,label=above:$c_1$]{};
		\node (2) at (1,2) [acteur,label=above:$c_2$]{};
		\node (3) at (2,2) [acteur,label=above:$c_3$]{};
		\node (4) at (3,2) [acteur,label=above:$c_4$]{};
		\node (5) at (1,1) [acteur,label=right:$b_1$]{};
		\node (6) at (0.5,0) [acteur,label=below:$a_1$]{};
		\node (7) at (1.5,0) [acteur,label=below:$a_2$]{};
		\node (8) at (2.5,0) [acteur,label=below:$a_3$]{};
		\draw[-, thick, red] (1)--(8) (1)--(7) (2)--(6) (2)--(8);
		\draw[-, thick, red] (3)--(5) (3)--(8) (4)--(5) (4)--(8);
		\draw[-, thick, red] (5)--(6) (5)--(7);
		\node at (1.5,-0.8) {(b$^\ast$) Dual of (b)};
		\end{scope}
		
		\begin{scope}[shift={(5,-5)}]
		\node (1) at (0,0) [acteur,label=below:$c_1$]{};
		\node (2) at (1,0) [acteur,label=below:$c_2$]{};
		\node (3) at (2,0) [acteur,label=below:$c_3$]{};
		\node (4) at (3,0) [acteur,label=below:$c_4$]{};
		\node (5) at (1,1) [acteur,label=left:$b_1$]{};
		\node (6) at (0.5,2) [acteur,label=above:$a_1$]{};
		\node (7) at (1.5,2) [acteur,label=above:$a_2$]{};
		\node (8) at (2.5,2) [acteur,label=above:$a_3$]{};
		\draw[-, thick, red] (1)--(6) (1)--(7) (2)--(6) (2)--(7);
		\draw[-, thick, red] (3)--(5) (3)--(8) (4)--(5) (4)--(8);
		\draw[-, thick, red] (5)--(6) (5)--(7);
		\node at (1.5,-0.8) {(c) $S^{1}\vee S^{1}\vee S^{1}$};
		\end{scope}
		
		\begin{scope}[shift={(10,-5)}]
		\node (1) at (0,2) [acteur,label=above:$c_1$]{};
		\node (2) at (1,2) [acteur,label=above:$c_2$]{};
		\node (3) at (2,2) [acteur,label=above:$c_3$]{};
		\node (4) at (3,2) [acteur,label=above:$c_4$]{};
		\node (5) at (1,1) [acteur,label=left:$b_1$]{};
		\node (6) at (0.5,0) [acteur,label=below:$a_1$]{};
		\node (7) at (1.5,0) [acteur,label=below:$a_2$]{};
		\node (8) at (2.5,0) [acteur,label=below:$a_3$]{};
		\draw[-, thick, red] (1)--(6) (1)--(7) (2)--(6) (2)--(7);
		\draw[-, thick, red] (3)--(5) (3)--(8) (4)--(5) (4)--(8);
		\draw[-, thick, red] (5)--(6) (5)--(7);
		\node at (1.5,-0.8) {(c$^\ast$) Dual of (c)};
		\end{scope}
		
		\begin{scope}[shift={(0,-10)}]
		\node (1) at (0,0) [acteur,label=below:$c_1$]{};
		\node (2) at (1,0) [acteur,label=below:$c_2$]{};
		\node (3) at (2,0) [acteur,label=below:$c_3$]{};
		\node (4) at (3,0) [acteur,label=below:$c_4$]{};
		\node (5) at (1,1) [acteur,label=left:$b_1$]{};
		\node (6) at (0.5,2) [acteur,label=above:$a_1$]{};
		\node (7) at (1.5,2) [acteur,label=above:$a_2$]{};
		\node (8) at (2.5,2) [acteur,label=above:$a_3$]{};
		\draw[-, thick, red] (1)--(6) (1)--(7) (2)--(6) (2)--(7) (2)--(8);
		\draw[-, thick, red] (3)--(5) (3)--(8) (4)--(5) (4)--(8);
		\draw[-, thick, red] (5)--(6) (5)--(7);
		\node at (1.5,-0.8) {(d) $S^{1}\vee S^{1}\vee S^{1}\vee S^{1}$};
		\end{scope}
		
		\begin{scope}[shift={(5,-10)}]
		\node (1) at (0,2) [acteur,label=above:$c_1$]{};
		\node (2) at (1,2) [acteur,label=above:$c_2$]{};
		\node (3) at (2,2) [acteur,label=above:$c_3$]{};
		\node (4) at (3,2) [acteur,label=above:$c_4$]{};
		\node (5) at (1,1) [acteur,label=left:$b_1$]{};
		\node (6) at (0.5,0) [acteur,label=below:$a_1$]{};
		\node (7) at (1.5,0) [acteur,label=below:$a_2$]{};
		\node (8) at (2.5,0) [acteur,label=below:$a_3$]{};
		\draw[-, thick, red] (1)--(6) (1)--(7) (2)--(6) (2)--(7) (2)--(8);
		\draw[-, thick, red] (3)--(5) (3)--(8) (4)--(5) (4)--(8);
		\draw[-, thick, red] (5)--(6) (5)--(7);
		\node at (1.5,-0.8) {(d$^\ast$) Dual of (d)};
		\end{scope}
		
		\begin{scope}[shift={(10,-10)}]
		\node (1) at (0,0) [acteur,label=below:$c_1$]{};
		\node (2) at (1,0) [acteur,label=below:$c_2$]{};
		\node (3) at (2,0) [acteur,label=below:$c_3$]{};
		\node (4) at (3,0) [acteur,label=below:$c_4$]{};
		\node (5) at (1,1) [acteur,label=left:$b_1$]{};
		\node (6) at (0.5,2) [acteur,label=above:$a_1$]{};
		\node (7) at (1.5,2) [acteur,label=above:$a_2$]{};
		\node (8) at (2.5,2) [acteur,label=above:$a_3$]{};
		\draw[-, thick, red] (1)--(6) (1)--(7) (1)--(8) (2)--(6) (2)--(7) (2)--(8);
		\draw[-, thick, red] (3)--(5) (3)--(8) (4)--(5) (4)--(8);
		\draw[-, thick, red] (5)--(6) (5)--(7);
		\node at (1.5,-0.8) {(e) $S^{1}\vee S^{1}\vee S^{1}\vee S^{1}\vee S^{1}$};
		\end{scope}
		
		\begin{scope}[shift={(0,-15)}]
		\node (1) at (0,2) [acteur,label=above:$c_1$]{};
		\node (2) at (1,2) [acteur,label=above:$c_2$]{};
		\node (3) at (2,2) [acteur,label=above:$c_3$]{};
		\node (4) at (3,2) [acteur,label=above:$c_4$]{};
		\node (5) at (1,1) [acteur,label=left:$b_1$]{};
		\node (6) at (0.5,0) [acteur,label=below:$a_1$]{};
		\node (7) at (1.5,0) [acteur,label=below:$a_2$]{};
		\node (8) at (2.5,0) [acteur,label=below:$a_3$]{};
		\draw[-, thick, red] (1)--(6) (1)--(7) (1)--(8)
		(2)--(6) (2)--(7) (2)--(8);
		\draw[-, thick, red] (3)--(5) (3)--(8)
		(4)--(5) (4)--(8);
		\draw[-, thick, red] (5)--(6) (5)--(7);
		\node at (1.5,-0.8) {(e$^\ast$) Dual of (e)};
		\end{scope}
		
		\begin{scope}[shift={(5,-15)}]
		\node (1) at (0,0) [acteur,label=below:$c_1$]{};
		\node (2) at (1,0) [acteur,label=below:$c_2$]{};
		\node (3) at (2,0) [acteur,label=below:$c_3$]{};
		\node (4) at (3,0) [acteur,label=below:$c_4$]{};
		\node (5) at (1,1) [acteur,label=left:$b_1$]{};
		\node (6) at (0.5,2) [acteur,label=above:$a_1$]{};
		\node (7) at (1.5,2) [acteur,label=above:$a_2$]{};
		\node (8) at (2.5,2) [acteur,label=above:$a_3$]{};
		\draw[-, thick, red] (1)--(6) (1)--(7) (2)--(5) (2)--(8);
		\draw[-, thick, red] (3)--(5) (3)--(8) (4)--(5) (4)--(8);
		\draw[-, thick, red] (5)--(6) (5)--(7);
		\node at (1.5,-0.8) {(f) $S^{1}\vee S^{1}\vee S^{1}$};
		\end{scope}
		
		\begin{scope}[shift={(10,-15)}]
		\node (1) at (0,2) [acteur,label=above:$c_1$]{};
		\node (2) at (1,2) [acteur,label=above:$c_2$]{};
		\node (3) at (2,2) [acteur,label=above:$c_3$]{};
		\node (4) at (3,2) [acteur,label=above:$c_4$]{};
		\node (5) at (1,1) [acteur,label=left:$b_1$]{};
		\node (6) at (0.5,0) [acteur,label=below:$a_1$]{};
		\node (7) at (1.5,0) [acteur,label=below:$a_2$]{};
		\node (8) at (2.5,0) [acteur,label=below:$a_3$]{};
		\draw[-, thick, red] (1)--(6) (1)--(7)
		(2)--(5) (2)--(8);
		\draw[-, thick, red] (3)--(5) (3)--(8)
		(4)--(5) (4)--(8);
		\draw[-, thick, red] (5)--(6) (5)--(7);
		\node at (1.5,-0.8) {(f$^\ast$) Dual of (f)};
		\end{scope}
		
		\begin{scope}[shift={(0,-20)}]
		\node (1) at (0,0) [acteur,label=below:$c_1$]{};
		\node (2) at (1,0) [acteur,label=below:$c_2$]{};
		\node (3) at (2,0) [acteur,label=below:$c_3$]{};
		\node (4) at (3,0) [acteur,label=below:$c_4$]{};
		\node (5) at (1,1) [acteur,label=left:$b_1$]{};
		\node (6) at (0.5,2) [acteur,label=above:$a_1$]{};
		\node (7) at (1.5,2) [acteur,label=above:$a_2$]{};
		\node (8) at (2.5,2) [acteur,label=above:$a_3$]{};
		\draw[-, thick, red] (1)--(6) (1)--(7) (1)--(8) (2)--(5) (2)--(8);
		\draw[-, thick, red] (3)--(5) (3)--(8) (4)--(5) (4)--(8);
		\draw[-, thick, red] (5)--(6) (5)--(7);
		\node at (1.5,-0.8) {(g) $S^{1}\vee S^{1}\vee S^{1}\vee S^{1}$};
		\end{scope}
		
		\begin{scope}[shift={(5,-20)}]
		\node (1) at (0,2) [acteur,label=above:$c_1$]{};
		\node (2) at (1,2) [acteur,label=above:$c_2$]{};
		\node (3) at (2,2) [acteur,label=above:$c_3$]{};
		\node (4) at (3,2) [acteur,label=above:$c_4$]{};
		\node (5) at (1,1) [acteur,label=left:$b_1$]{};
		\node (6) at (0.5,0) [acteur,label=below:$a_1$]{};
		\node (7) at (1.5,0) [acteur,label=below:$a_2$]{};
		\node (8) at (2.5,0) [acteur,label=below:$a_3$]{};
		\draw[-, thick, red] (1)--(6) (1)--(7) (1)--(8)
		(2)--(5) (2)--(8);
		\draw[-, thick, red] (3)--(5) (3)--(8)
		(4)--(5) (4)--(8);
		\draw[-, thick, red] (5)--(6) (5)--(7);
		\node at (1.5,-0.8) {(g$^\ast$) Dual of (g)};
		\end{scope}
		\end{tikzpicture}
		\vspace{-0.4\baselineskip} 
		\captionsetup{labelfont={bf}, textfont={it}}
		\captionof{figure}{Finite Model of $\vee S^{1}$}
		\label{n}
	\end{center}
\end{minipage}
	
	\begin{minipage}{\linewidth}
	\begin{center}
		\begin{tikzpicture}
		[acteur/.style={circle, fill=black,thick, inner sep=2pt, minimum size=0.2cm}]
		
		\begin{scope}[shift={(0,0)}]
		\node (1) at (0,2) [acteur,label=above:$a_1$]{};
		\node (2) at (1,2) [acteur,label=above:$a_2$]{};
		\node (3) at (0,1) [acteur,label=left:$b_1$]{};
		\node (4) at (1,1) [acteur,label=left:$b_2$]{};
		\node (8) at (2.5,2) [acteur,label=above:$a_3$]{};
		\node (5) at (0.5,0) [acteur,label=below:$c_1$]{};
		\node (6) at (1.5,0) [acteur,label=below:$c_2$]{};
		\node (7) at (2.5,0) [acteur,label=below:$c_3$]{};
		\draw[-, thick, red] (1)--(3) (1)--(4)
		(2)--(3) (2)--(4);
		\draw[-, thick, red] (3)--(5) (3)--(6)
		(4)--(6) (4)--(7);
		\draw[-, thick, red] (8)--(5) (8)--(7);
		\node at (1.5,-0.8) {(a) $S^{1}$};
		\end{scope}
		
		\begin{scope}[shift={(5,0)}]
		\node (1) at (0,0) [acteur,label=below:$a_1$]{};
		\node (2) at (1,0) [acteur,label=below:$a_2$]{};
		\node (3) at (0,1) [acteur,label=left:$b_1$]{};
		\node (4) at (1,1) [acteur,label=left:$b_2$]{};
		\node (8) at (2.5,0) [acteur,label=below:$a_3$]{};
		\node (5) at (0.5,2) [acteur,label=above:$c_1$]{};
		\node (6) at (1.5,2) [acteur,label=above:$c_2$]{};
		\node (7) at (2.5,2) [acteur,label=above:$c_3$]{};
		\draw[-, thick, red] (1)--(3) (1)--(4) (2)--(3) (2)--(4);
		\draw[-, thick, red] (3)--(5) (3)--(6) (4)--(6) (4)--(7);
		\draw[-, thick, red] (8)--(5) (8)--(7);
		\node at (1.5,-0.8) {(a$^\ast$) Dual of (a)};
		\end{scope}
		
		\begin{scope}[shift={(10,0)}]
		\node (1) at (0,0) [acteur,label=below:$c_1$]{};
		\node (2) at (1,0) [acteur,label=below:$c_2$]{};
		\node (3) at (0,1) [acteur,label=left:$b_1$]{};
		\node (4) at (1,1) [acteur,label=left:$b_2$]{};
		\node (8) at (2.5,0) [acteur,label=below:$c_3$]{};
		\node (5) at (0.5,2) [acteur,label=above:$a_1$]{};
		\node (6) at (1.5,2) [acteur,label=above:$a_2$]{};
		\node (7) at (2.5,2) [acteur,label=above:$a_3$]{};
		\draw[-, thick, red] (1)--(3) (1)--(4) (2)--(3) (2)--(4);
		\draw[-, thick, red] (3)--(5) (3)--(6) (4)--(5)  (4)--(6) (2)--(7);
		\draw[-, thick, red] (8)--(6) (8)--(7);
		\node at (1.5,-0.8) {(b) $S^{2}\vee S^{1}$};
		\end{scope}
		
		\begin{scope}[shift={(0,-4)}]
		\node (1) at (0,2) [acteur,label=above:$c_1$]{};
		\node (2) at (1,2) [acteur,label=above:$c_2$]{};
		\node (3) at (0,1) [acteur,label=left:$b_1$]{};
		\node (4) at (1,1) [acteur,label=left:$b_2$]{};
		\node (8) at (2.5,2) [acteur,label=above:$c_3$]{};
		\node (5) at (0.5,0) [acteur,label=below:$a_1$]{};
		\node (6) at (1.5,0) [acteur,label=below:$a_2$]{};
		\node (7) at (2.5,0) [acteur,label=below:$a_3$]{};
		\draw[-, thick, red] (1)--(3) (1)--(4)
		(2)--(3) (2)--(4);
		\draw[-, thick, red] (3)--(5) (3)--(6)
		(4)--(5) (4)--(6) (2)--(7);
		\draw[-, thick, red] (8)--(6) (8)--(7);
		\node at (1.5,-0.8) {(b$^\ast$) Dual of (b)};
		\end{scope}
		
		\begin{scope}[shift={(5,-4)}]
		\node (1) at (0,0) [acteur,label=below:$c_1$]{};
		\node (2) at (1,0) [acteur,label=below:$c_2$]{};
		\node (3) at (0,1) [acteur,label=left:$b_1$]{};
		\node (4) at (1,1) [acteur,label=left:$b_2$]{};
		\node (8) at (2.5,0) [acteur,label=below:$c_3$]{};
		\node (5) at (0.5,2) [acteur,label=above:$a_1$]{};
		\node (6) at (1.5,2) [acteur,label=above:$a_2$]{};
		\node (7) at (2.5,2) [acteur,label=above:$a_3$]{};
		\draw[-, thick, red] (1)--(3) (1)--(4) (2)--(3) (2)--(4);
		\draw[-, thick, red] (3)--(5) (3)--(6) (4)--(5)  (4)--(6) (4)--(7);
		\draw[-, thick, red] (8)--(6) (8)--(7);
		\node at (1.5,-0.8) {(c) $S^{2}\vee S^{1}$};
		\end{scope}
		
		\begin{scope}[shift={(10,-4)}]
		\node (1) at (0,2) [acteur,label=above:$c_1$]{};
		\node (2) at (1,2) [acteur,label=above:$c_2$]{};
		\node (3) at (0,1) [acteur,label=left:$b_1$]{};
		\node (4) at (1,1) [acteur,label=left:$b_2$]{};
		\node (8) at (2.5,2) [acteur,label=above:$c_3$]{};
		\node (5) at (0.5,0) [acteur,label=below:$a_1$]{};
		\node (6) at (1.5,0) [acteur,label=below:$a_2$]{};
		\node (7) at (2.5,0) [acteur,label=below:$a_3$]{};
		\draw[-, thick, red] (1)--(3) (1)--(4)
		(2)--(3) (2)--(4);
		\draw[-, thick, red] (3)--(5) (3)--(6)
		(4)--(5) (4)--(6) (4)--(7);
		\draw[-, thick, red] (8)--(6) (8)--(7);
		\node at (1.5,-0.8) {(c$^\ast$) Dual of (c)};
		\end{scope}
		
		\begin{scope}[shift={(0,-8)}]
		\node (1) at (0,0) [acteur,label=below:$c_1$]{};
		\node (2) at (1,0) [acteur,label=below:$c_2$]{};
		\node (3) at (0,1) [acteur,label=left:$b_1$]{};
		\node (4) at (1,1) [acteur,label=left:$b_2$]{};
		\node (8) at (2.5,0) [acteur,label=below:$c_3$]{};
		\node (5) at (0.5,2) [acteur,label=above:$a_1$]{};
		\node (6) at (1.5,2) [acteur,label=above:$a_2$]{};
		\node (7) at (2.5,2) [acteur,label=above:$a_3$]{};
		\draw[-, thick, red] (1)--(3) (1)--(7) (2)--(3) (2)--(4);
		\draw[-, thick, red] (3)--(5) (3)--(6) (4)--(5)  (4)--(6) (2)--(7);
		\draw[-, thick, red] (8)--(4) (8)--(7);
		\node at (1.5,-0.8) {(d) $S^{1}\vee S^{1}$};
		\end{scope}
		
		\begin{scope}[shift={(5,-8)}] 
		\node (1) at (0,2) [acteur,label=above:$c_1$]{};
		\node (2) at (1,2) [acteur,label=above:$c_2$]{};
		\node (8) at (2.5,2) [acteur,label=above:$c_3$]{};
		\node (3) at (0,1) [acteur,label=left:$b_1$]{};
		\node (4) at (1,1) [acteur,label=left:$b_2$]{};
		\node (5) at (0.5,0) [acteur,label=below:$a_1$]{};
		\node (6) at (1.5,0) [acteur,label=below:$a_2$]{};
		\node (7) at (2.5,0) [acteur,label=below:$a_3$]{};
		\draw[-, thick, red] (3)--(1) (7)--(1) (3)--(2) (4)--(2);
		\draw[-, thick, red] (5)--(3) (6)--(3) (5)--(4) (6)--(4) (7)--(2);
		\draw[-, thick, red] (4)--(8) (7)--(8);
		\node at (1.5,-0.8) {(d$^\ast$) Dual of (d)};
		\end{scope}
		
		\begin{scope}[shift={(10,-8)}]
		\node (1) at (0,0) [acteur,label=below:$c_1$]{};
		\node (2) at (1,0) [acteur,label=below:$c_2$]{};
		\node (3) at (0,1) [acteur,label=left:$b_1$]{};
		\node (4) at (1,1) [acteur,label=left:$b_2$]{};
		\node (8) at (2.5,0) [acteur,label=below:$c_3$]{};
		\node (5) at (0.5,2) [acteur,label=above:$a_1$]{};
		\node (6) at (1.5,2) [acteur,label=above:$a_2$]{};
		\node (7) at (2.5,2) [acteur,label=above:$a_3$]{};
		\draw[-, thick, red] (1)--(3) (1)--(7) (2)--(3) (2)--(4);
		\draw[-, thick, red] (3)--(5) (3)--(6) (4)--(7)  (4)--(6);
		\draw[-, thick, red] (8)--(4) (8)--(5);
		\node at (1.5,-0.8) {(e) $S^{1}\vee S^{1}$ = (e$^\ast$) Dual of (e)};
		\end{scope}
		
		\begin{scope}[shift={(0,-12)}]
		\node (1) at (0,0) [acteur,label=below:$c_1$]{};
		\node (2) at (1,0) [acteur,label=below:$c_2$]{};
		\node (3) at (0,1) [acteur,label=left:$b_1$]{};
		\node (4) at (1,1) [acteur,label=left:$b_2$]{};
		\node (8) at (2.5,0) [acteur,label=below:$c_3$]{};
		\node (5) at (0.5,2) [acteur,label=above:$a_1$]{};
		\node (6) at (1.5,2) [acteur,label=above:$a_2$]{};
		\node (7) at (2.5,2) [acteur,label=above:$a_3$]{};
		\draw[-, thick, red] (1)--(3) (1)--(4) (4)--(7) (2)--(3) (2)--(4);
		\draw[-, thick, red] (3)--(5) (3)--(6) (4)--(5)  (4)--(6) (3)--(7);
		\draw[-, thick, red] (8)--(5) (8)--(6) (8)--(7);
		\node at (1.5,-0.8) {(f) $S^{2}\vee S^{1}$};
		\end{scope}
		
		\begin{scope}[shift={(5,-12)}]
		\node (1) at (0,2) [acteur,label=above:$c_1$]{};
		\node (2) at (1,2) [acteur,label=above:$c_2$]{};
		\node (8) at (2.5,2) [acteur,label=above:$c_3$]{};
		\node (3) at (0,1) [acteur,label=left:$b_1$]{};
		\node (4) at (1,1) [acteur,label=left:$b_2$]{};
		\node (5) at (0.5,0) [acteur,label=below:$a_1$]{};
		\node (6) at (1.5,0) [acteur,label=below:$a_2$]{};
		\node (7) at (2.5,0) [acteur,label=below:$a_3$]{};
		\draw[-, thick, red] (3)--(1) (4)--(1) (7)--(4) (3)--(2) (4)--(2);
		\draw[-, thick, red] (5)--(3) (6)--(3) (5)--(4) (6)--(4) (7)--(3);
		\draw[-, thick, red] (5)--(8) (6)--(8) (7)--(8);
		\node at (1.5,-0.8) {($f^\ast$) Dual of (f)};
		\end{scope}
		
		\begin{scope}[shift={(10,-12)}]
		\node (1) at (0,0) [acteur,label=below:$c_1$]{};
		\node (2) at (1,0) [acteur,label=below:$c_2$]{};
		\node (3) at (0,1) [acteur,label=left:$b_1$]{};
		\node (4) at (1,1) [acteur,label=left:$b_2$]{};
		\node (8) at (2,0) [acteur,label=below:$c_3$]{};
		\node (5) at (0,2) [acteur,label=above:$a_1$]{};
		\node (6) at (1,2) [acteur,label=above:$a_2$]{};
		\node (7) at (2,1) [acteur,label=right:$b_3$]{};
		\draw[-, thick, red] (1)--(3) (1)--(4) (2)--(3);
		\draw[-, thick, red] (3)--(5) (3)--(6) (2)--(7) (8)--(7) (4)--(5)  (4)--(6);
		\draw[-, thick, red] (5)--(7) (6)--(7) (8)--(4);
		\node at (1,-0.8) {(g) $S^{2}$};
		\end{scope}
		
		\begin{scope}[shift={(0,-16)}]
		\node (1) at (0,2) [acteur,label=above:$c_1$]{};
		\node (2) at (1,2) [acteur,label=above:$c_2$]{};
		\node (3) at (0,1) [acteur,label=left:$b_1$]{};
		\node (4) at (1,1) [acteur,label=left:$b_2$]{};
		\node (8) at (2,2) [acteur,label=above:$c_3$]{};
		\node (5) at (0,0) [acteur,label=below:$a_1$]{};
		\node (6) at (1,0) [acteur,label=below:$a_2$]{};
		\node (7) at (2,1) [acteur,label=right:$b_3$]{};
		\draw[-, thick, red] (1)--(3) (1)--(4) (2)--(3);
		\draw[-, thick, red] (3)--(5) (3)--(6)
		(2)--(7) (8)--(7) (4)--(5) (4)--(6);
		\draw[-, thick, red] (5)--(7) (6)--(7) (8)--(4);
		\node at (1,-0.8) {(g$^\ast$) Dual of (g)};
		\end{scope}
		
		\begin{scope}[shift={(5,-16)}]
		\node (1) at (0,0) [acteur,label=below:$c_1$]{};
		\node (2) at (1,0) [acteur,label=below:$c_2$]{};
		\node (3) at (0,1) [acteur,label=left:$b_1$]{};
		\node (4) at (1,1) [acteur,label=left:$b_2$]{};
		\node (8) at (2,0) [acteur,label=below:$c_3$]{};
		\node (5) at (0,2) [acteur,label=above:$a_1$]{};
		\node (6) at (1,2) [acteur,label=above:$a_2$]{};
		\node (7) at (2,1) [acteur,label=right:$b_3$]{};
		\draw[-, thick, red] (1)--(3) (1)--(4) (2)--(3) (2)--(4);
		\draw[-, thick, red] (3)--(5) (3)--(6) (2)--(7) (8)--(7) (4)--(5)  (4)--(6);
		\draw[-, thick, red] (5)--(7) (6)--(7) (8)--(4);
		\node at (1,-0.8) {(h) $S^{2}\vee S^{2}$};
		\end{scope}
		
		\begin{scope}[shift={(10,-16)}]
		\node (1) at (0,2) [acteur,label=above:$c_1$]{};
		\node (2) at (1,2) [acteur,label=above:$c_2$]{};
		\node (3) at (0,1) [acteur,label=left:$b_1$]{};
		\node (4) at (1,1) [acteur,label=left:$b_2$]{};
		\node (8) at (2,2) [acteur,label=above:$c_3$]{};
		\node (5) at (0,0) [acteur,label=below:$a_1$]{};
		\node (6) at (1,0) [acteur,label=below:$a_2$]{};
		\node (7) at (2,1) [acteur,label=right:$b_3$]{};
		\draw[-, thick, red] (1)--(3) (1)--(4) (2)--(3) (2)--(4);
		\draw[-, thick, red] (3)--(5) (3)--(6) (2)--(7) (8)--(7)
		(4)--(5) (4)--(6);
		\draw[-, thick, red] (5)--(7) (6)--(7) (8)--(4);
		\node at (1,-0.8) {(h$^\ast$) Dual of (h)};
		\end{scope}
		\end{tikzpicture}
		\vspace{-1\baselineskip} 
		\captionsetup{labelfont={bf}, textfont={it}}
		\captionof{figure}{Finite Models of Mixed Wedge Sums of Spheres}
		\label{o}
	\end{center}
\end{minipage}
	
	\begin{enumerate}
		\item The minimal finite models of $S^{1}\vee S^{1}\vee S^{2}$ with eight points are precisely the seven posets shown in Fig.~\ref{k};
		
		\begin{minipage}{\linewidth}
		\begin{center}
			\begin{tikzpicture}
			[acteur/.style={circle, fill=black,thick, inner sep=2pt, minimum size=0.2cm}]
			\begin{scope}[shift={(0,0)}]
			\node (1) at (0,0) [acteur,label=below:$c_1$]{};
			\node (2) at (1,0) [acteur,label=below:$c_2$]{};
			\node (3) at (2,0) [acteur,label=below:$c_3$]{};
			\node (8) at (3,0) [acteur,label=below:$c_4$]{};
			\node (4) at (0,1) [acteur,label=left:$b_1$]{};
			\node (5) at (1,1) [acteur,label=left:$b_2$]{};
			\node (6) at (1,2) [acteur,label=above:$a_1$]{};
			\node (7) at (2,2) [acteur,label=above:$a_2$]{};
			\draw[-, thick, red] (1) -- (4);
			\draw[-, thick, red] (1) -- (5);
			\draw[-, thick, red] (2) -- (4);
			\draw[-, thick, red] (2) -- (5);
			\draw[-, thick, red] (3) -- (6);
			\draw[-, thick, red] (3) -- (7);
			\draw[-, thick, red] (4) -- (6);
			\draw[-, thick, red] (4) -- (7);
			\draw[-, thick, red] (5) -- (6);
			\draw[-, thick, red] (5) -- (7);
			\draw[-, thick, red] (8) -- (6);
			\draw[-, thick, red] (8) -- (7);
			\node at (1.5,-0.8) {(a)};
			\end{scope}
			\begin{scope}[shift={(5,0)}]
			\node (1) at ( 1,0) [acteur,label=below:$c_1$]{};
			\node (2) at ( 2,0) [acteur,label=below:$c_2$]{};
			\node (3) at (0,1) [acteur][label=left:$b_1$]{};
			\node (4) at (1,1) [acteur][label=left:$b_2$]{};
			\node (5) at (2,2) [acteur,label=above:$a_3$]{};
			\node (6) at (3,2) [acteur,label=above:$a_4$]{};
			\node (7) at (0,2) [acteur][label=above:$a_1$]{};
			\node (8) at (1,2) [acteur][label=above:$a_2$]{};
			\draw [-, thick, red] (1) -- (3) (1) -- (4) (1) -- (5) (1) -- (6);
			\draw [-, thick, red] (2) -- (3) (2) -- (4) (2) -- (5) (2) -- (6);
			\draw [-, thick, red] (3) -- (7) (3) -- (8);
			\draw [-, thick, red] (4) -- (7) (4) -- (8);
			\node at (1.5,-0.8) {(a$^\ast$) Dual of (a)};
			\end{scope}
			\begin{scope}[shift={(10,0)}]
			\node (1) at (0,0) [acteur,label=below:$c_1$]{};
			\node (2) at (1,0) [acteur,label=below:$c_2$]{};
			\node (3) at (0,1) [acteur,label=left:$b_1$]{};
			\node (4) at (.8,1) [acteur,label=left:$b_2$]{};
			\node (8) at (2.2,0) [acteur,label=below:$c_3$]{};
			\node (5) at (0,2) [acteur,label=above:$a_1$]{};
			\node (6) at (1,2) [acteur,label=above:$a_2$]{};
			\node (7) at (2.2,2) [acteur,label=above:$a_3$]{};
			\draw[-, thick, red] (1)--(3) (1)--(4) (2)--(3) (2)--(4);
			\draw[-, thick, red] (3)--(5) (3)--(6) (4)--(5) (4)--(6);
			\draw[-, thick, red] (8)--(5) (8)--(6) (7)--(1) (7)--(2);
			\node at (1.1,-0.8) {(b) = (b$^\ast$) Dual of (b)};
			\end{scope}
			\begin{scope}[shift={(0,-5)}]
			\node (1) at (0,0) [acteur,label=below:$c_1$]{};
			\node (2) at (1,0) [acteur,label=below:$c_2$]{};
			\node (3) at (0,1) [acteur,label=left:$b_1$]{};
			\node (4) at (.8,1) [acteur,label=left:$b_2$]{};
			\node (8) at (2.2,0) [acteur,label=below:$c_3$]{};
			\node (5) at (0,2) [acteur,label=above:$a_1$]{};
			\node (6) at (1,2) [acteur,label=above:$a_2$]{};
			\node (7) at (2.2,2) [acteur,label=above:$a_3$]{};
			\draw[-, thick, red] (1)--(3) (1)--(4) (2)--(3) (2)--(4);
			\draw[-, thick, red] (3)--(5) (3)--(6) (4)--(5) (4)--(6);
			\draw[-, thick, red] (8)--(5) (8)--(6) (8)--(7) (7)--(2);
			\node at (1.2,-0.8) {(c)};
			\end{scope}
			\begin{scope}[shift={(5,-5)}]
			\node (1) at (0,2) [acteur,label=above:$c_1$]{};
			\node (2) at (1,2) [acteur,label=above:$c_2$]{};
			\node (3) at (0,1) [acteur,label=left:$b_1$]{};
			\node (4) at (.8,1) [acteur,label=left:$b_2$]{};
			\node (8) at (2.2,2) [acteur,label=above:$c_3$]{};
			\node (5) at (0,0) [acteur,label=below:$a_1$]{};
			\node (6) at (1,0) [acteur,label=below:$a_2$]{};
			\node (7) at (2.2,0) [acteur,label=below:$a_3$]{};
			\draw[-, thick, red] (1)--(3) (1)--(4) (2)--(3) (2)--(4);
			\draw[-, thick, red] (3)--(5) (3)--(6)
			(4)--(5) (4)--(6);
			\draw[-, thick, red] (8)--(5) (8)--(6)
			(8)--(7) (7)--(2);
			\node at (1.2,-0.8) {(c$^\ast$) Dual of (c)};
			\end{scope}
			\begin{scope}[shift={(10,-5)}]
			\node (1) at (0,0) [acteur,label=below:$c_1$]{};
			\node (2) at (1,0) [acteur,label=below:$c_2$]{};
			\node (3) at (0,1) [acteur,label=left:$b_1$]{};
			\node (4) at (.8,1) [acteur,label=left:$b_2$]{};
			\node (8) at (2.2,0) [acteur,label=below:$c_3$]{};
			\node (5) at (0,2) [acteur,label=above:$a_1$]{};
			\node (6) at (1,2) [acteur,label=above:$a_2$]{};
			\node (7) at (2.2,2) [acteur,label=above:$a_3$]{};
			\draw[-, thick, red] (1)--(3) (1)--(4) (2)--(3) (2)--(4);
			\draw[-, thick, red] (3)--(5) (3)--(6) (4)--(5) (4)--(6);
			\draw[-, thick, red] (8)--(5) (8)--(6) (8)--(7) (7)--(4);
			\node at (1.2,-0.8) {(d)};
			\end{scope}
			\begin{scope}[shift={(0,-10)}]
			\node (1) at (0,2) [acteur,label=above:$c_1$]{};
			\node (2) at (1,2) [acteur,label=above:$c_2$]{};
			\node (3) at (0,1) [acteur,label=left:$b_1$]{};
			\node (4) at (.8,1) [acteur,label=left:$b_2$]{};
			\node (8) at (2.2,2) [acteur,label=above:$c_3$]{};
			\node (5) at (0,0) [acteur,label=below:$a_1$]{};
			\node (6) at (1,0) [acteur,label=below:$a_2$]{};
			\node (7) at (2.2,0) [acteur,label=below:$a_3$]{};
			\draw[-, thick, red] (1)--(3) (1)--(4) (2)--(3) (2)--(4);
			\draw[-, thick, red] (3)--(5) (3)--(6)
			(4)--(5) (4)--(6);
			\draw[-, thick, red] (8)--(5) (8)--(6)
			(8)--(7) (7)--(4);
			\node at (1.2,-0.8) {(d$^\ast$) Dual of (d)};
			\end{scope}
			\end{tikzpicture}
			\vspace{-1.2\baselineskip} 
			\captionsetup{labelfont={bf}, textfont={it}}
			\captionof{figure}{Minimal finite models of $S^1 \vee S^1\vee S^2$}
			\label{k}
		\end{center}
	\end{minipage}
		
		\item The minimal finite model of $S^{1}\vee S^{1}\vee S^{1}\vee S^{2}$ with eight points is precisely the one poset shown in Fig.~\ref{q};
		
		\begin{minipage}{\linewidth}
		\begin{center}
			\begin{tikzpicture}
			[acteur/.style={circle, fill=black,thick, inner sep=2pt, minimum size=0.2cm}]
			\node (1) at (0,2) [acteur,label=above:$c_1$]{};
			\node (2) at (1,2) [acteur,label=above:$c_2$]{};
			\node (3) at (0,1) [acteur,label=left:$b_1$]{};
			\node (4) at (1,1) [acteur,label=left:$b_2$]{};
			\node (8) at (2.5,2) [acteur,label=above:$c_3$]{};
			\node (5) at (0.5,0) [acteur,label=below:$a_1$]{};
			\node (6) at (1.5,0) [acteur,label=below:$a_2$]{};
			\node (7) at (2.5,0) [acteur,label=below:$a_3$]{};
			\draw[-, thick, red] (1)--(3) (1)--(4) (1)--(7)
			(2)--(3) (2)--(4);
			\draw[-, thick, red] (3)--(5) (3)--(6)
			(4)--(5) (4)--(6) (2)--(7);
			\draw[-, thick, red] (8)--(5) (8)--(6) (8)--(7);
			\node at (1.5,-0.8) {(a)};
			\end{tikzpicture}
			\vspace{.2cm} 
			\captionsetup{labelfont={bf}, textfont={it}}
			\captionof{figure}{Minimal finite model of $S^1 \vee S^1\vee S^1\vee S^2$}
			\label{q}
		\end{center}
	\end{minipage}	
		\vspace{.5cm}
		\item The minimal finite models of $S^{1}\vee S^{2}\vee S^{2}$ with eight points are represented by the ten posets shown in Fig.~\ref{l`}. 
		However, the spaces in Fig.~\ref{l`}(c), (d), and (e) are homeomorphic. 
		Thus, up to homeomorphism, there are only six distinct minimal finite models of $S^{1}\vee S^{2}\vee S^{2}$;
		
		\begin{minipage}{\linewidth}
		\begin{center}
			\begin{tikzpicture}[acteur/.style={circle, fill=black, thick, inner sep=2pt, minimum size=0.2cm}]
			
			\begin{scope}[shift={(0,0)}]
			\node (1) at (0,0) [acteur,label=below:$c_1$]{};
			\node (2) at (1,0) [acteur,label=below:$c_2$]{};
			\node (8) at (2.5,0) [acteur,label=below:$c_3$]{};
			\node (3) at (0,1) [acteur,label=left:$b_1$]{};
			\node (4) at (1,1) [acteur,label=left:$b_2$]{};
			\node (5) at (2,1) [acteur,label=right:$b_3$]{};
			\node (6) at (0,2) [acteur,label=above:$a_1$]{};
			\node (7) at (1,2) [acteur,label=above:$a_2$]{};
			\draw[-, thick, red] (1)--(3) (1)--(4) (1)--(5) (2)--(3) (2)--(4) (2)--(5);
			\draw[-, thick, red] (3)--(6) (3)--(7) (4)--(6) (4)--(7) (5)--(6) (5)--(7);
			\draw[-, thick, red] (6)--(8) (7)--(8);
			\node at (1,-0.8) {(a)};
			\end{scope}
			\begin{scope}[shift={(5,0)}]
			\node (1) at (0,0) [acteur,label=below:$c_1$]{};
			\node (2) at (1,0) [acteur,label=below:$c_2$]{};
			\node (3) at (0,1) [acteur,label=left:$b_1$]{};
			\node (4) at (1,1) [acteur,label=left:$b_2$]{};
			\node (5) at (2,1) [acteur,label=left:$b_3$]{};
			\node (8) at (2.5,2) [acteur,label=above:$a_3$]{};
			\node (6) at (0,2) [acteur,label=above:$a_1$]{};
			\node (7) at (1,2) [acteur,label=above:$a_2$]{};
			\draw[-, thick, red] (1)--(3) (1)--(4) (1)--(5) (2)--(3) (2)--(4) (2)--(5);
			\draw[-, thick, red] (3)--(6) (3)--(7) (4)--(6) (4)--(7) (5)--(6) (5)--(7);
			\draw[-, thick, red] (1)--(8) (2)--(8);
			\node at (1,-0.8) {(a$^\ast$) Dual of (a)};
			\end{scope}

			\begin{scope}[shift={(10,0)}]
			\node (5) at (0,0) [acteur,label=below:$c_1$]{};
			\node (6) at (1,0) [acteur,label=below:$c_2$]{};
			\node (7) at (2,0) [acteur,label=below:$c_3$]{};
			\node (8) at (3,0) [acteur,label=below:$c_4$]{};
			\node (3) at (0,1) [acteur,label=left:$b_1$]{};
			\node (4) at (1,1) [acteur,label=left:$b_2$]{};
			\node (1) at (0,2) [acteur,label=above:$a_1$]{};
			\node (2) at (1,2) [acteur,label=above:$a_2$]{};
			
			\draw[-, thick, red] (3)--(1) (4)--(1) (3)--(2) (4)--(2);
			\draw[-, thick, red] (5)--(3) (6)--(3) (7)--(3) (5)--(4) (6)--(4) (7)--(4);
			\draw[-, thick, red] (1)--(8) (2)--(8);
			\node at (1,-0.8) {(b)};
			\end{scope}
			\begin{scope}[shift={(0,-5)}]
			\node (1) at (0,0) [acteur,label=below:$c_1$]{};
			\node (2) at (1,0) [acteur,label=below:$c_2$]{};
			\node (3) at (0,1) [acteur,label=left:$b_1$]{};
			\node (4) at (1,1) [acteur,label=left:$b_2$]{};
			\node (5) at (0,2) [acteur,label=above:$a_1$]{};
			\node (6) at (1,2) [acteur,label=above:$a_2$]{};
			\node (7) at (2,2) [acteur,label=above:$a_3$]{};
			\node (8) at (3,2) [acteur,label=above:$a_4$]{};
			\draw[-, thick, red] (1)--(3) (1)--(4) (2)--(3) (2)--(4);
			\draw[-, thick, red] (3)--(5) (3)--(6) (3)--(7) (4)--(5) (4)--(6) (4)--(7);
			\draw[-, thick, red] (8)--(1) (8)--(2);
			\node at (1,-0.8) {(b$^\ast$) Dual of (b)};
			\end{scope}
			
			\begin{scope}[shift={(5,-5)}]
			\node (1) at (0,0) [acteur,label=below:$c_1$]{};
			\node (2) at (1,0) [acteur,label=below:$c_2$]{};
			\node (3) at (0,1) [acteur,label=left:$b_1$]{};
			\node (4) at (1,1) [acteur,label=left:$b_2$]{};
			\node (8) at (2.5,0) [acteur,label=below:$c_3$]{};
			\node (5) at (0,2) [acteur,label=above:$a_1$]{};
			\node (6) at (1,2) [acteur,label=above:$a_2$]{};
			\node (7) at (2,2) [acteur,label=above:$a_3$]{};
			\draw[-, thick, red] (1)--(3) (1)--(4) (2)--(3) (2)--(4);
			\draw[-, thick, red] (3)--(5) (3)--(6) (3)--(7) (4)--(5) (4)--(6) (4)--(7);
			\draw[-, thick, red] (8)--(5) (8)--(6);
			\node at (1,-0.8) {(c)};
			\end{scope}
			
			\begin{scope}[shift={(10,-5)}]
			\node (5) at (0,0) [acteur,label=below:$c_1$]{};
			\node (6) at (1,0) [acteur,label=below:$c_2$]{};
			\node (7) at (2,0) [acteur,label=below:$c_3$]{};
			\node (3) at (0,1) [acteur,label=left:$b_1$]{};
			\node (4) at (1,1) [acteur,label=left:$b_2$]{};
			\node (8) at (2.5,2) [acteur,label=above:$a_3$]{};
			\node (1) at (0,2) [acteur,label=above:$a_1$]{};
			\node (2) at (1,2) [acteur,label=above:$a_2$]{};
			\draw[-, thick, red] (3)--(1) (4)--(1) (3)--(2) (4)--(2);
			\draw[-, thick, red] (5)--(3) (6)--(3) (7)--(3) (5)--(4) (6)--(4) (7)--(4);
			\draw[-, thick, red] (5)--(8) (6)--(8);
			\node at (1,-0.8) {(c$^\ast$) Dual of (c)};
			\end{scope}
			
			\begin{scope}[shift={(0,-10)}]
			\node (1) at (0,0) [acteur,label=below:$c_1$]{};
			\node (2) at (1,0) [acteur,label=below:$c_2$]{};
			\node (3) at (0,1) [acteur,label=left:$b_1$]{};
			\node (4) at (1,1) [acteur,label=left:$b_2$]{};
			\node (8) at (2.5,0) [acteur,label=below:$c_3$]{};
			\node (5) at (0,2) [acteur,label=above:$a_1$]{};
			\node (6) at (1,2) [acteur,label=above:$a_2$]{};
			\node (7) at (2,2) [acteur,label=above:$a_3$]{};
			\draw[-, thick, red] (1)--(3) (1)--(4) (2)--(3) (2)--(4);
			\draw[-, thick, red] (3)--(5) (3)--(6) (3)--(7) (4)--(5) (4)--(6) (4)--(7);
			\draw[-, thick, red] (8)--(5) (8)--(7);
			\node at (1,-0.8) {(d)};
			\end{scope}
			
			\begin{scope}[shift={(5,-10)}]
			\node (5) at (0,0) [acteur,label=below:$c_1$]{};
			\node (6) at (1,0) [acteur,label=below:$c_2$]{};
			\node (7) at (2,0) [acteur,label=below:$c_3$]{};
			\node (3) at (0,1) [acteur,label=left:$b_1$]{};
			\node (4) at (1,1) [acteur,label=left:$b_2$]{};
			\node (8) at (2.5,2) [acteur,label=above:$a_3$]{};
			\node (1) at (0,2) [acteur,label=above:$a_1$]{};
			\node (2) at (1,2) [acteur,label=above:$a_2$]{};
			\draw[-, thick, red] (5)--(3) (6)--(3) (7)--(3);
			\draw[-, thick, red] (5)--(4) (6)--(4) (7)--(4);
			\draw[-, thick, red] (5)--(8) (7)--(8);
			\draw[-, thick, red] (3)--(1) (3)--(2);
			\draw[-, thick, red] (4)--(1) (4)--(2);
			\node at (1,-0.8) {(d$^\ast$) Dual of (d)};
			\end{scope}
			
			\begin{scope}[shift={(10,-10)}]
			\node (1) at (0,0) [acteur,label=below:$c_1$]{};
			\node (2) at (1,0) [acteur,label=below:$c_2$]{};
			\node (3) at (0,1) [acteur,label=left:$b_1$]{};
			\node (4) at (1,1) [acteur,label=left:$b_2$]{};
			\node (8) at (2,0) [acteur,label=below:$c_3$]{};
			\node (5) at (0,2) [acteur,label=above:$a_1$]{};
			\node (6) at (1,2) [acteur,label=above:$a_2$]{};
			\node (7) at (2,2) [acteur,label=above:$a_3$]{};
			\draw[-, thick, red] (1)--(3) (1)--(4) (2)--(3) (2)--(4);
			\draw[-, thick, red] (3)--(5) (3)--(6) (3)--(7) (4)--(5) (4)--(6) (4)--(7);
			\draw[-, thick, red] (8)--(6) (8)--(7);
			\node at (1,-0.8) {(e)};
			\end{scope}
			
			\begin{scope}[shift={(0,-15)}]
			\node (5) at (0,0) [acteur,label=below:$c_1$]{};
			\node (6) at (1,0) [acteur,label=below:$c_2$]{};
			\node (7) at (2,0) [acteur,label=below:$c_3$]{};
			\node (3) at (0,1) [acteur,label=left:$b_1$]{};
			\node (4) at (1,1) [acteur,label=left:$b_2$]{};
			\node (8) at (2,2) [acteur,label=above:$a_3$]{};
			\node (1) at (0,2) [acteur,label=above:$a_1$]{};
			\node (2) at (1,2) [acteur,label=above:$a_2$]{};
			
			\draw[-, thick, red] (5)--(3) (6)--(3) (7)--(3);
			\draw[-, thick, red] (5)--(4) (6)--(4) (7)--(4);
			\draw[-, thick, red] (6)--(8) (7)--(8);
			\draw[-, thick, red] (3)--(1) (3)--(2);
			\draw[-, thick, red] (4)--(1) (4)--(2);
			
			\node at (1,-0.8) {(e$^\ast$) Dual of (e)};
			\end{scope}
			\end{tikzpicture}
			\captionsetup{labelfont={bf}, textfont={it}}
			\captionof{figure}{Minimal finite models of $S^1 \vee S^2\vee S^2$}
			\label{l`}
		\end{center}
	\end{minipage}
			
		\item The minimal finite models of $S^{2}\vee S^{2}\vee S^{2}$ with eight points are precisely the five posets shown in Fig.~\ref{h`}.
		\begin{center}
			\begin{tikzpicture}
			[acteur/.style={circle, fill=black,thick, inner sep=2pt, minimum size=0.2cm}] 
			\begin{scope}[shift={(0,0)}] 
			\node (5) at (0,0) [acteur,label=below:$c_1$]{};
			\node (6) at (1,0) [acteur,label=below:$c_2$]{};
			\node (7) at (2,0) [acteur,label=below:$c_3$]{};
			\node (8) at (3,0) [acteur,label=below:$c_4$]{};
			\node (3) at (0,1) [acteur,label=left:$b_1$]{};
			\node (4) at (1,1) [acteur,label=right:$b_2$]{};
			\node (1) at (0,2) [acteur,label=above:$a_1$]{};
			\node (2) at (1,2) [acteur,label=above:$a_2$]{};
			\draw[-, thick, red] 
			(5)--(3) (5)--(4) (6)--(3) (6)--(4)
			(7)--(3) (7)--(4) (8)--(3) (8)--(4)
			(3)--(1) (3)--(2) (4)--(1) (4)--(2);
			\node at (1.5,-0.8) {(a)};
			\end{scope}
			
			\begin{scope}[shift={(5,0)}]
			\node (1) at (0,0) [acteur,label=below:$c_1$]{};
			\node (2) at (1,0) [acteur,label=below:$c_2$]{};
			\node (3) at (0,1) [acteur,label=left:$b_1$]{};
			\node (4) at (1,1) [acteur,label=right:$b_2$]{};
			\node (5) at (0,2) [acteur,label=above:$a_1$]{};
			\node (6) at (1,2) [acteur,label=above:$a_2$]{};
			\node (7) at (2,2) [acteur,label=above:$a_3$]{};
			\node (8) at (3,2) [acteur,label=above:$a_4$]{};
			\draw[-, thick, red] (1)--(3) (1)--(4) (2)--(3) (2)--(4);
			\draw[-, thick, red] (3)--(5) (3)--(6) (3)--(7) (3)--(8);
			\draw[-, thick, red] (4)--(5) (4)--(6) (4)--(7) (4)--(8);
			\node at (1.5,-0.8) {(a$^\ast$) Dual of (a)};
			\end{scope}
			
			\begin{scope}[shift={(10,0)}]
			\node (1) at (0,0) [acteur,label=below:$c_1$]{};
			\node (2) at (1,0) [acteur,label=below:$c_2$]{};
			\node (3) at (0,1) [acteur,label=left:$b_1$]{};
			\node (4) at (1,1) [acteur,label=left:$b_2$]{};
			\node (5) at (2,1) [acteur,label=left:$b_3$]{};
			\node (8) at (3,1) [acteur,label=left:$b_4$]{};
			\node (6) at (0,2) [acteur,label=above:$a_1$]{};
			\node (7) at (1,2) [acteur,label=above:$a_2$]{};
			\draw[-, thick, red] (1)--(3) (1)--(4) (1)--(5);
			\draw[-, thick, red] (2)--(3) (2)--(4) (2)--(5);
			\draw[-, thick, red] (3)--(6) (3)--(7);
			\draw[-, thick, red] (4)--(6) (4)--(7);
			\draw[-, thick, red] (5)--(6) (5)--(7);
			\draw[-, thick, red] (8)--(6) (8)--(7) (8)--(1) (8)--(2);
			\node at (1.5,-0.8) {(b) = (b$^\ast$) Dual of (b)};
			\end{scope}
			
			\begin{scope}[shift={(0,-5)}]
			\node (1) at (0,0) [acteur,label=below:$c_1$]{};
			\node (2) at (1,0) [acteur,label=below:$c_2$]{};
			\node (3) at (0,1) [acteur,label=left:$b_1$]{};
			\node (4) at (1,1) [acteur,label=left:$b_2$]{};
			\node (8) at (2,0) [acteur,label=below:$c_3$]{};
			\node (5) at (0,2) [acteur,label=above:$a_1$]{};
			\node (6) at (1,2) [acteur,label=above:$a_2$]{};
			\node (7) at (2,1) [acteur,label=right:$b_3$]{};
			\draw[-, thick, red] (1)--(3) (1)--(4) (1)--(7) (2)--(3) (2)--(4);
			\draw[-, thick, red] (3)--(5) (3)--(6) (2)--(7) (8)--(7) (4)--(5)  (4)--(6);
			\draw[-, thick, red] (5)--(7) (6)--(7) (8)--(4);
			\node at (1,-0.8) {$(c)$};
			\end{scope}
			
			\begin{scope}[shift={(5,-5)}] 
			\node (1) at (0,2) [acteur,label=above:$c_1$]{};
			\node (2) at (1,2) [acteur,label=above:$c_2$]{};
			\node (8) at (2,2) [acteur,label=above:$c_3$]{};
			\node (3) at (0,1) [acteur,label=left:$b_1$]{};
			\node (4) at (1,1) [acteur,label=left:$b_2$]{};
			\node (7) at (2,1) [acteur,label=right:$b_3$]{};
			\node (5) at (0,0) [acteur,label=below:$a_1$]{};
			\node (6) at (1,0) [acteur,label=below:$a_2$]{};
			\draw[-, thick, red] (3)--(1) (4)--(1) (7)--(1) (3)--(2) (4)--(2);
			\draw[-, thick, red] (5)--(3) (6)--(3) (7)--(2) (7)--(8) (5)--(4) (6)--(4);
			\draw[-, thick, red] (7)--(5) (7)--(6) (4)--(8);
			\node at (1,-0.8) {(c$^\ast$) Dual of (c)};
			\end{scope}
			\end{tikzpicture}
			\captionsetup{labelfont={bf}, textfont={it}}
			\captionof{figure}{Minimal finite models of $S^2\vee S^2\vee S^2$}
			\label{h`}
		\end{center}
		
		\item The minimal finite models of $S^{2}\vee S^{2}\vee S^{2}\vee S^{2}$ with eight points are precisely the three posets shown in Fig.~\ref{p}.
		\begin{center}
			\begin{tikzpicture}
			[acteur/.style={circle, fill=black,thick, inner sep=2pt, minimum size=0.2cm}] 
			\begin{scope}[shift={(0,0)}]
			\node (1) at (0,0) [acteur,label=below:$c_1$]{};
			\node (2) at (1,0) [acteur,label=below:$c_2$]{};
			\node (3) at (0,1) [acteur,label=left:$b_1$]{};
			\node (4) at (1,1) [acteur,label=left:$b_2$]{};
			\node (8) at (2,0) [acteur,label=below:$c_3$]{};
			\node (5) at (0,2) [acteur,label=above:$a_1$]{};
			\node (6) at (1,2) [acteur,label=above:$a_2$]{};
			\node (7) at (2,1) [acteur,label=right:$b_3$]{};
			\draw[-, thick, red] (1)--(3) (1)--(4) (1)--(7) (2)--(3) (2)--(4);
			\draw[-, thick, red] (3)--(5) (3)--(6) (3)--(8) (2)--(7) (8)--(7) (4)--(5)  (4)--(6);
			\draw[-, thick, red] (5)--(7) (6)--(7) (8)--(4);
			\node at (1,-0.8) {$(a)$};
			\end{scope}
			
			\begin{scope}[shift={(5,0)}] 
			\node (1) at (0,2) [acteur,label=above:$c_1$]{};
			\node (2) at (1,2) [acteur,label=above:$c_2$]{};
			\node (8) at (2,2) [acteur,label=above:$c_3$]{};
			\node (3) at (0,1) [acteur,label=left:$b_1$]{};
			\node (4) at (1,1) [acteur,label=left:$b_2$]{};
			\node (7) at (2,1) [acteur,label=right:$b_3$]{};
			\node (5) at (0,0) [acteur,label=below:$a_1$]{};
			\node (6) at (1,0) [acteur,label=below:$a_2$]{};
			\draw[-, thick, red] (3)--(1) (4)--(1) (7)--(1) (3)--(2) (4)--(2);
			\draw[-, thick, red] (5)--(3) (6)--(3) (8)--(3) (7)--(2) (7)--(8) (5)--(4) (6)--(4);
			\draw[-, thick, red] (7)--(5) (7)--(6) (4)--(8);
			\node at (1,-0.8) {(a$^\ast$) Dual of (a)};
			\end{scope}

			\begin{scope}[shift={(10,0)}]
			\node (1) at (0,0) [acteur,label=below:$c_1$]{};
			\node (2) at (1,0) [acteur,label=below:$c_2$]{};
			\node (3) at (0,1) [acteur,label=left:$b_1$]{};
			\node (4) at (1,1) [acteur,label=left:$b_2$]{};
			\node (8) at (2,0) [acteur,label=below:$c_3$]{};
			\node (5) at (0,2) [acteur,label=above:$a_1$]{};
			\node (6) at (1,2) [acteur,label=above:$a_2$]{};
			\node (7) at (2,2) [acteur,label=above:$a_3$]{};
			\draw[-, thick, red] (1)--(3) (1)--(4) (2)--(3) (2)--(4);
			\draw[-, thick, red] (3)--(5) (3)--(6) (3)--(7) (4)--(5)  (4)--(6) (4)--(7);
			\draw[-, thick, red] (8)--(3) (8)--(4);
			\node at (1.5,-0.8) {$(b)= (b^\ast)$ Dual of (b)};
			\end{scope}
			\end{tikzpicture}
			\captionsetup{labelfont={bf}, textfont={it}}
			\captionof{figure}{Minimal finite models of $S^2\vee S^2\vee S^2\vee S^2$}
			\label{p}
		\end{center}
	\end{enumerate}
\end{theorem}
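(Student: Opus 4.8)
The plan is to carry the stratification argument from the proof of Theorem~\ref{18} over to $|X|=8$. Since $X$ is a minimal finite space it has no beat points, and by Remark~\ref{16} the condition $h(X)=2$ makes $\mathcal{K}(X)$ two--dimensional, so $X$ is partitioned into its three levels: the maximal elements $\mathrm{mxl}(X)$, the middle elements $B_X$, and the minimal elements $\mathrm{mnl}(X)$. One reduces to $X$ connected (a disconnected minimal space on eight points splits into minimal spaces on fewer points, already described by Remark~\ref{9} and Theorem~\ref{18}); then $X$ is non-contractible, so by \cite[Remark~1.2.8]{Barmak(2011)} we have $\#\mathrm{mxl}(X)\geq 2$ and $\#\mathrm{mnl}(X)\geq 2$, while $h(X)=2$ gives $\#B_X\geq 1$. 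Passing to $X^{op}$ if necessary, assume $\#\mathrm{mxl}(X)\leq\#\mathrm{mnl}(X)$. With $\#\mathrm{mxl}(X)+\#B_X+\#\mathrm{mnl}(X)=8$ the admissible profiles $(\#\mathrm{mxl}(X),\#B_X,\#\mathrm{mnl}(X))$ are exactly
\[
(2,1,5),\qquad (3,1,4),\qquad (2,2,4),\qquad (3,2,3),\qquad (2,3,3),\qquad (2,4,2),
\]
and the proof treats these six cases in turn, as in Theorem~\ref{18}.

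For each profile the next step is to list, up to isomorphism, all Hasse diagrams compatible with minimality and connectedness. Such a diagram is encoded by the bipartite comparabilities $B_X\times\mathrm{mxl}(X)$ and $B_X\times\mathrm{mnl}(X)$ together with the direct relations in $\mathrm{mnl}(X)\times\mathrm{mxl}(X)$. The beat-point-free requirement says: for every $a\in\mathrm{mxl}(X)$ the set $\hat{U}_a$ has no maximum --- so $a$ covers at least two elements and cannot sit above a single middle element together only with elements beneath that element --- the dual statement holds for every $c\in\mathrm{mnl}(X)$, and, by Remark~2.3 of~\cite{Cianci-Ottina(2020)}, every $b\in B_X$ satisfies $\#(\hat{F}_b\cap\mathrm{mxl}(X))\geq 2$ and $\#(\hat{U}_b\cap\mathrm{mnl}(X))\geq 2$. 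Imposing these conditions together with connectedness reduces each profile to a short explicit list; the cases $\#B_X=1$ and $\#B_X=2$ are the most involved.

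The weak homotopy type of each surviving $X$ is then read off from its order complex $\mathcal{K}(X)$, which has the same weak homotopy type as $X$ and is a connected two--dimensional simplicial complex: $\pi_1(\mathcal{K}(X))$ is free, $H_1(\mathcal{K}(X))$ has rank equal to the number of circle summands, and $H_2(\mathcal{K}(X))=\ker\partial_2$ has rank equal to the number of $2$--sphere summands; for the diagrams destined to be minimal finite models of the five target wedges one also exhibits the corresponding modified regular CW--structure, matching cells to levels exactly as in Figures~\ref{j}, \ref{m} and~\ref{m`}. Diagrams whose weak homotopy type is that of a space in Figures~\ref{n} or~\ref{o} --- each of which already admits a finite model on fewer than eight points ($S^2$, $S^2\vee S^1$, $S^2\vee S^2$, or a wedge of at most five circles) --- are recorded there but are not minimal finite models of the named wedges. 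The remaining diagrams, sorted by homotopy type, are exactly those of Figures~\ref{k}, \ref{q}, \ref{l`}, \ref{h`} and~\ref{p}. Since $\mathcal{K}(X^{op})\cong\mathcal{K}(X)$ and $X^{op}$ is again minimal, each list is closed under duality; the count up to homeomorphism then follows by identifying which posets are self-dual and checking the remaining coincidences by explicit relabelling --- notably that Fig.~\ref{l`}(c), (d), (e) are mutually homeomorphic --- which yields the numbers $7$, $1$, $6$, $5$ and $3$. Finally, minimality of these eight-point models follows from Remark~\ref{18`}: no space on at most seven points has the weak homotopy type of any of $S^{1}\vee S^{1}\vee S^{2}$, $S^{1}\vee S^{1}\vee S^{1}\vee S^{2}$, $S^{1}\vee S^{2}\vee S^{2}$, $S^{2}\vee S^{2}\vee S^{2}$, $S^{2}\vee S^{2}\vee S^{2}\vee S^{2}$, and any finite model of these five spaces has height exactly $2$ (height $\geq 3$ would force at least ten points, apart from $\mathbb{S}^{3}(S^0)$ by Theorem~\ref{7}, which has none of these homotopy types).

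I expect the main obstacle to be the breadth and delicacy of the enumeration in the cases $\#B_X=1$ and $\#B_X=2$: many incidence patterns occur, and for each one must (i) verify the beat-point-free condition carefully, since it is strictly stronger than ``every element has two comparable neighbours'' --- a maximal element lying above a middle element together with that element's down-set is still a beat point, and several near-miss configurations look admissible at a glance --- and (ii) pin down the homotopy type exactly, which compels an honest computation of $H_2(\mathcal{K}(X))$, because $\pi_1$ alone cannot separate $S^{1}\vee S^{1}\vee S^{1}$ from $S^{1}\vee S^{1}\vee S^{1}\vee S^{2}$. A secondary difficulty is the bookkeeping of isomorphisms and homeomorphisms so that no configuration is counted twice or omitted; this is precisely what the explicit identifications among Fig.~\ref{l`}(c), (d), (e) and the checks of self-duality of the posets labelled (b) in Figures~\ref{k}, \ref{h`} and~\ref{p} accomplish.
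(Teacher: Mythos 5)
Your proposal follows essentially the same route as the paper: the same reduction to a beat-point-free, connected, three-level poset, the same case split by $\#B_X$ and the level-size profiles, the same use of Remark~2.3 of Cianci--Ottina together with connectedness to enumerate configurations, the same identification of homotopy types via the order complex, and the same final step of discarding the configurations of Figures~\ref{n} and~\ref{o} because those homotopy types already admit smaller models. Your added remarks (the explicit reduction to the connected case and the height-$\geq 3$ point-count argument for why minimal models of these wedges must have height exactly $2$) are sound refinements of steps the paper leaves implicit, but do not change the structure of the argument.
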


\begin{proof}
	By Remark~\ref{9}, the space $X$ has no beat points. 
	Replacing $X$ by its opposite $X^{op}$ if necessary, we may assume that 
	\[
	\#\mathrm{mxl}(X)\leq \#\mathrm{mnl}(X).
	\]
	From Remark~\ref{15}, we know that $\#\mathrm{mxl}(X),\#\mathrm{mnl}(X)\geq 2$, and since $X$ is connected, we have $\mathrm{mxl}(X)\cap \mathrm{mnl}(X)=\varnothing$. 
	Moreover, by Remark~\ref{16}, $h(X)=2$, hence $B_X\neq \emptyset$.
	
	Since $|X|=8$, the possible distributions of points among the three levels are determined by
	\[
	|X| = \#\mathrm{mxl}(X) + \#B_X + \#\mathrm{mnl}(X) = 8,
	\]
	so that $\#B_X \in \{1,2,3,4\}$.
	We analyse these cases separately.
	
	\medskip
	\noindent\textbf{Case 1: $\#B_X=1$.}
	
	Here, there are two possible distributions for the remaining elements:
	\[
	(\#\mathrm{mxl}(X), \#\mathrm{mnl}(X)) = (2,5) \quad \text{or} \quad (3,4).
	\]
	
	\smallskip
	\noindent\emph{\textbf{Subcase 1.1:}} $\#\mathrm{mxl}(X)=2$, $\#\mathrm{mnl}(X)=5$.
	
	Let 
	\[
	\mathrm{mxl}(X)=\{a_1,a_2\}, \quad B_X=\{b_1\}, \quad \mathrm{mnl}(X)=\{c_1,c_2,c_3,c_4,c_5\}.
	\]
	As $B_X$ is an antichain and, by Remark~2.3(2) of \cite{Cianci-Ottina(2020)}, $b_1$ must be connected to both maximal elements,
	\[
	\#(\hat{F}_{b_1}\cap \mathrm{mxl}(X))=2.
	\]
	Let $\alpha_1=\#(\hat{U}_{b_1}\cap \mathrm{mnl}(X))$.  Note that $2 \leq \alpha_1 \leq  5$. A detailed analysis of all possibilities for $\alpha_1$ that ensure connectedness shows that any resulting 8-point space $X$ will necessarily contain beat points. For instance, in any valid configuration, at least two of the minimal points will be beat points.
	
	While these 8-point configurations are valid topological spaces, none of them can be an 8-point minimal finite model. The true minimal finite model for any space in this configuration would be obtained by iteratively removing its beat points, resulting in a space with fewer than 8 points.
	
	Therefore, this subcase yields no 8-point minimal finite models.
	
	\noindent\emph{\textbf{Subcase 1.2:}} $\#\mathrm{mxl}(X)=3$, $\#\mathrm{mnl}(X)=4$.
	
	Let 
	\[
	\mathrm{mxl}(X)=\{a_1,a_2, a_3\}, \quad B_X=\{b_1\}, \quad \mathrm{mnl}(X)=\{c_1,c_2,c_3,c_4\}.
	\]
	As $B_X$ is an antichain and, by Remark~2.3(2) of \cite{Cianci-Ottina(2020)},
	\[
	\beta= \#(\hat{F}_{b_1}\cap \mathrm{mxl}(X))\in \{2, 3\}.
	\]
	Let $\alpha=\#(\hat{U}_{b_1}\cap \mathrm{mnl}(X))$. Note that $\alpha \in \{2,3,4\}$.
	A detailed analysis of all possibilities for $(\beta, \alpha)$, ensuring connectedness and the absence of beat points, yields the following minimal configurations:
	
	When $(\beta, \alpha)=(2, 2)$, without loss of generality we can assume that $\hat{F}_{b_1}\cap \mathrm{mxl}(X)= \{a_1, a_2\}$, $\hat{U}_{b_1}\cap \mathrm{mxl}(X)= \{c_3, c_4\}$, $\hat{F}_{c_3}= \hat{F}_{c_4}= \{b_1, a_3\}$.
	\begin{enumerate}
		\item When $\#(\hat{F}_{c_1}\cap \mathrm{mxl}(X))=2$, $\#(\hat{F}_{c_2}\cap \mathrm{mxl}(X))=2$ and $\#(\hat{F}_{c_1}\cap \hat{F}_{c_2})=1$ or $\#(\hat{F}_{c_1}\cap \hat{F}_{c_2})=2$, one obtains a space representing $S^{1}\vee S^{1}\vee S^{1}$ (Figure \ref{n} (a), (b) and (c); duals are $(a^\ast)$, $(b^\ast)$ and $(c^\ast)$ respectively).
		
		\item When $\#(\hat{F}_{c_1}\cap \mathrm{mxl}(X))=2$ and $\#(\hat{F}_{c_2}\cap \mathrm{mxl}(X))=3$ or $\#(\hat{F}_{c_1}\cap \mathrm{mxl}(X))=3$ and $\#(\hat{F}_{c_2}\cap \mathrm{mxl}(X))=2$, one obtains a space representing $S^{1}\vee S^{1}\vee S^{1}\vee S^{1}$ (Figure \ref{n} (d), dual is $(d^\ast)$).
		
		\item When $\#(\hat{F}_{c_1}\cap \mathrm{mxl}(X))=\#(\hat{F}_{c_2}\cap \mathrm{mxl}(X))=3$, one obtains a space representing $S^{1}\vee S^{1}\vee S^{1}\vee S^{1}\vee S^{1}$ (Figure \ref{n} (e), dual is $(e^\ast)$).
	\end{enumerate}
	
	When $(\beta, \alpha)=(2, 3)$, without loss of generality we can assume that $\hat{F}_{b_1}\cap \mathrm{mxl}(X)= \{a_1, a_2\}$, $\hat{U}_{b_1}\cap \mathrm{mxl}(X)= \{c_2, c_3, c_4\}$ and $\hat{F}_{c_2}= \hat{F}_{c_3}= \hat{F}_{c_4}= \{b_1, a_3\}$.
	\begin{enumerate}
		\item When $\hat{F}_{c_1}= \{a_1, a_2\}$, one obtains a space representing $S^{1}\vee S^{1}\vee S^{1}$ (Figure \ref{n} (f), dual is $(f^\ast)$).
		
		\item When $\hat{F}_{c_1}= \{a_1, a_2, a_3\}$, one obtains a space representing $S^{1}\vee S^{1}\vee S^{1}\vee S^{1}$ (Figure \ref{n} (g), dual is $(g^\ast)$).
	\end{enumerate}
	
	Hence, Subcase~1.2 yields finite models of three different homotopy types: 
	$S^{1}\vee S^{1}\vee S^{1}$, $S^{1}\vee S^{1}\vee S^{1}\vee S^{1}$ and $S^{1}\vee S^{1}\vee S^{1}\vee S^{1}\vee S^{1}$.
	
	To confirm this classification, we compute the algebraic invariants 
	for one representative space among those obtained above.
	Consider the finite space $X$ shown in Figure~\ref{n}(a), 
	which is one of the finite models of $S^{1}\vee S^{1}\vee S^{1}$.
	
	The complex $\mathcal{K}(X)$, denoted $K$, is a $2$-dimensional simplicial complex defined by the following sets of vertices, edges, and triangles:
	$$
	V=\{a_{1},a_{2},a_{3},b_{1},c_{1},c_{2},c_{3},c_{4}\},
	$$
	The set of $1$-simplices (Edges $E$) is:
	$$
	E=\{c_1a_1, c_1a_2, c_2a_2, c_2a_3, c_3b_1, c_3a_3, c_3a_1, c_3a_2, c_4b_1, c_4a_3, c_4a_1, c_4a_2, b_1a_1, b_1a_2\}
	$$
	The set of $2$-simplices (Triangles $T$) is:
	$$
	T=\{c_3b_1a_1, c_3b_1a_2, c_4b_1a_1, c_4b_1a_2\}
	$$
	This complex has $V=8$ vertices, $E=14$ distinct edges, and $F=4$ $2$-simplices.
	
	The Euler characteristic is:
	$$
	\chi(K) = V - E + F = 8 - 14 + 4 = -2,
	$$
	matching the Euler characteristic of a wedge of three circles, $\chi(S^1 \vee S^1 \vee S^1) = 1 - 3 = -2$.
	
	The reduced homology groups over $\mathbb{Z}$ are derived from the Betti numbers. Using the chain complex $C_*(K)$, the dimensions of the chain groups are:
	$$
	\dim C_0(K) = 8,\qquad
	\dim C_1(K) = 14,\qquad
	\dim C_2(K) = 4.
	$$
	From the computation of the boundary map ranks over $\mathbb{F}_2$: $\operatorname{rank}(d_1)=7$ and $\operatorname{rank}(d_2)=4$. This yields the Betti numbers:
	$$
	\beta_0 = 8 - 7 = 1,\qquad \beta_2 = 4 - 4 = 0,\qquad \beta_1 = (14 - 7) - 4 = 3.
	$$
	The reduced homology groups are:
	$$
	\widetilde{H}_{i}(K)\;\cong\;
	\begin{cases}
	\mathbf{\mathbb{Z}^{3}}, & i=1,\\[4pt]
	0, & i\ge 2.
	\end{cases}
	$$
	These homology groups match those of $S^{1}\vee S^{1}\vee S^{1}$.
	
	Geometrically, $K$ deformation retracts onto a wedge of three circles. The subcomplex $L$ on vertices $\{a_1,a_2,b_1,c_3,c_4\}$ (with its 8 edges and 4 triangles) is contractible ($H_*(L)=\mathbb{Z}$ in dimension 0, and 0 otherwise). Attaching the remaining structure to $L$ (which contracts to a point) yields three independent loops at the basepoint:
	\begin{enumerate}
		\item one from edges $c_1a_1$ and $c_1a_2$,
		\item one from edges $c_3a_3$ and $c_4a_3$,
		\item one from edges $c_2a_2$, $c_2a_3$, and $c_3a_3$.
	\end{enumerate}
	The $2$-simplices impose no relations on these loops, so the fundamental group is the free group on three generators:
	$$
	\pi_{1}(K)\;\cong\;F_{3}.
	$$
	Furthermore, since $H_2(K)=0$ ($\beta_2=0$), the second homotopy group is trivial:
	$$
	\pi_2(K) = 0.
	$$
	Since $K$ is a connected $2$-dimensional CW complex with $\pi_2(K)=0$, $K$ is aspherical. Its homotopy type is therefore determined entirely by its fundamental group $F_3$, confirming:
	$$
	K \simeq S^1 \vee S^1 \vee S^1.
	$$
	
	\medskip
	\noindent\textbf{Case 2: $\#B_X=2$.}
	
	Here, there are two possible distributions for the remaining elements:
	\[
	(\#\mathrm{mxl}(X), \#\mathrm{mnl}(X)) = (2,4) \quad \text{or} \quad (3,3).
	\]
	
	\smallskip
	\noindent\emph{\textbf{Subcase 2.1:}} $\#\mathrm{mxl}(X)=2$, $\#\mathrm{mnl}(X)=4$.
	
	Let 
	\[
	\mathrm{mxl}(X)=\{a_1,a_2\}, \quad B_X=\{b_1,b_2\}, \quad \mathrm{mnl}(X)=\{c_1,c_2,c_3,c_4\}.
	\]
	As $B_X$ is an antichain and, by Remark~2.3(2) of \cite{Cianci-Ottina(2020)}, each $b_i$ must be connected to both maximal elements, i.e. for $i=1,2$
	\[
	\#(\hat{F}_{b_i}\cap \mathrm{mxl}(X))=2.
	\]
	Let $\alpha_i=\#(\hat{U}_{b_i}\cap \mathrm{mnl}(X))$. Note that $\alpha_i\in \{2,3,4\}$.
	A detailed analysis of all possibilities for $(\alpha_1,\alpha_2)$, ensuring connectedness and the absence of beat points, yields the following minimal configurations:
	
	\begin{enumerate}
		\item When $(\alpha_1,\alpha_2)=(2,2)$ and $\#(\hat{U}_{b_1}\cap\hat{U}_{b_2})=2$ such that $\#(\mathrm{mnl}(X)\setminus (\hat{U}_{b_1}\cup\hat{U}_{b_2}))=2$, one obtains a space representing $S^{1}\vee S^{1}\vee S^{2}$ (Fig.~\ref{k} (a) dual is $(a^\ast)$).
		
		\item When $(\alpha_1,\alpha_2)=(3,3)$ and $\#(\hat{U}_{b_1}\cap\hat{U}_{b_2})=3$ such that $\#(\mathrm{mnl}(X)\setminus (\hat{U}_{b_1}\cup\hat{U}_{b_2}))=1$, one obtains a space representing $S^{1}\vee S^{2}\vee S^{2}$ (Fig.~\ref{l`} (b), dual is $(b^\ast)$).
		
		\item When $(\alpha_1,\alpha_2)=(4,4)$ and $\#(\hat{U}_{b_1}\cap\hat{U}_{b_2})=4$, one obtains a space representing $S^{2}\vee S^{2}\vee S^{2}$ (Fig.~\ref{h`} (a), dual is $(a^\ast)$).
	\end{enumerate}
	
	Hence, Subcase~2.1 yields minimal finite models of three different homotopy types: 
	$S^{1}\vee S^{1}\vee S^{2}$, $S^{1}\vee S^{2}\vee S^{2}$ and $S^{2}\vee S^{2}\vee S^{2}$.
	
 	\noindent\emph{\textbf{Subcase 2.2:}} $\#\mathrm{mxl}(X)=3$, $\#\mathrm{mnl}(X)=3$.
	
	Let 
	\[
	\mathrm{mxl}(X)=\{a_1,a_2, a_3\}, \quad B_X=\{b_1,b_2\}, \quad \mathrm{mnl}(X)=\{c_1,c_2,c_3\}.
	\]
	As $B_X$ is an antichain and, by Remark~2.3(2) of \cite{Cianci-Ottina(2020)}, each $b_i$ must be connected to at least two maximal and two minimal elements, i.e., for $i= 1,2$,
	\[
	\beta_i= \#(\hat{F}_{b_i}\cap \mathrm{mxl}(X))\in \{2,3\}
	\quad \text{and} \quad
	\alpha_i=\#(\hat{U}_{b_i}\cap \mathrm{mnl}(X))\in \{2,3\}
	\]
	
	\noindent
	
	A detailed analysis of all possibilities for $(\beta_1, \beta_2)$ and $(\alpha_1,\alpha_2)$, ensuring connectedness and the absence of beat points, yields the following minimal configurations:
	
	\begin{enumerate}
	\item When $(\beta_1,\beta_2)=(2,2)$ and $(\alpha_1,\alpha_2)=(2,2)$ with
	\[
	\#(\hat{U}_{b_1}\cap\hat{U}_{b_2})=2,\qquad
	\#(\hat{F}_{b_1}\cap\hat{F}_{b_2})=2,
	\]
	such that
	\[
	\hat{U}_{b_1}\cap\hat{U}_{b_2}=\{c_1,c_2\},\qquad
	\hat{F}_{b_1}\cap\hat{F}_{b_2}=\{a_1,a_2\},
	\]
	there are three subcases depending on the remaining intersections:
	
	\begin{enumerate}[(i)]
		\item $\hat{F}_{c_3}\cap\mathrm{mxl}(X)=\{a_2,a_3\},\;
		\hat{U}_{a_3}\cap\mathrm{mnl}(X)=\{c_2,c_3\}$. 
		One obtains a space representing $S^{2}\vee S^{1}$ (Fig.~\ref{o}(b); dual is $(b^{\ast})$).
		\item $\hat{F}_{c_3}\cap\mathrm{mxl}(X)=\{a_1,a_2\},\;
		\hat{U}_{a_3}\cap\mathrm{mnl}(X)=\{c_1,c_2\}$. 
		One obtains a space representing $S^{1}\vee S^{1}\vee S^{2}$ (Fig.~\ref{k}(c); same dual).
		\item $\hat{F}_{c_3}\cap\mathrm{mxl}(X)=\{a_1,a_2,a_3\},\;
		\hat{U}_{a_3}\cap\mathrm{mnl}(X)=\{c_2,c_3\}$. 
		One obtains a space representing $S^{1}\vee S^{1}\vee S^{2}$ (Fig.~\ref{k}(d); dual is $(d^{\ast})$).
		\item $\hat{F}_{c_3}\cap\mathrm{mxl}(X)=\{a_1,a_2,a_3\},\;
		\hat{U}_{a_3}\cap\mathrm{mnl}(X)=\{c_1,c_2,c_3\}$. 
		One obtains a space representing $S^{1}\vee S^{1}\vee S^{1}\vee S^{2}$ (Fig.~\ref{q}, same dual).
	\end{enumerate}
	
	To confirm this classification, we compute the algebraic invariants for one representative space among those obtained above. Consider the finite space $X$ shown in Figure~\ref{k}(c), which is one of the finite models of $S^{1}\vee S^{1}\vee S^{2}$.
	
	By McCord's theorem (as presented in Barmak~\cite{Barmak(2011)}), a finite $T_{0}$-space is weak homotopy equivalent to its order complex. In particular,
	$$
	\pi_{n}(X)\;\cong\;\pi_{n}(\mathcal{K}(X)) \quad \text{for all } n \ge 1.
	$$
	The order complex $\mathcal{K}(X)$ of this space is a $2$-dimensional simplicial complex whose vertices, edges, and triangles are given by
	$$
	V=\{a_{1},a_{2},a_{3},b_{1},b_{2},c_{1},c_{2},c_{3}\},
	$$
	$$
	E=\{b_{1}a_{1},b_{1}a_{2},b_{2}a_{1},b_{2}a_{2},c_{1}b_{1},c_{1}b_{2},
	c_{2}b_{1},c_{2}b_{2},c_{1}a_{1},c_{1}a_{2},c_{2}a_{1},c_{2}a_{2},
	c_{2}a_{3},c_{3}a_{1},c_{3}a_{2},c_{3}a_{3}\},
	$$
	$$
	T=\{c_{1}b_{1}a_{1},c_{1}b_{1}a_{2},c_{1}b_{2}a_{1},c_{1}b_{2}a_{2},
	c_{2}b_{1}a_{1},c_{2}b_{1}a_{2},c_{2}b_{2}a_{1},c_{2}b_{2}a_{2}\}.
	$$
	
	The dimensions are $\dim C_0(\mathcal{K}(X))=8$, $\dim C_1(\mathcal{K}(X))=16$, and $\dim C_2(\mathcal{K}(X))=8$. The Euler characteristic is $\chi(\mathcal{K}(X)) = 8 - 16 + 8 = 0$, matching $\chi(S^1 \vee S^1 \vee S^2) = 0$.
	
	The eight $2$-simplices form a triangulated $2$-sphere on the vertex set $\{c_{1},c_{2},b_{1},b_{2},a_{1},a_{2}\}$. The remaining vertices $a_{3}$ and $c_{3}$ attach outside this sphere along the edges $c_{2}a_{3}$ and $\{c_{3}a_{1}, c_{3}a_{2}, c_{3}a_{3}\}$, respectively.
	
	These attachments create exactly two independent nontrivial loops in the $1$-skeleton:
	$$
	\ell_{1} = a_{1}\to b_{1}\to a_{2}\to c_{3}\to a_{1},
	\qquad
	\ell_{2} = a_{1}\to c_{3}\to a_{3}\to c_{2}\to a_{1},
	$$
	neither of which bounds a $2$-chain, since no triangle in $T$ contains $c_{3}$ or $a_{3}$. Thus $\mathcal{K}(X)$ contains two non-contractible cycles attached to a $2$-sphere.
	
	To compute the Betti numbers of $\mathcal{K}(X)$, we construct the chain complex $C(\mathcal{K}(X))$ and find the ranks of the boundary maps $d_1: C_1 \to C_0$ and $d_2: C_2 \to C_1$ over $\mathbb{F}_2$ (which yield the integral Betti numbers, as $H(\mathcal{K}(X))$ is torsion-free):
	$$
	\operatorname{rank}(d_1) = 7 \quad \text{and} \quad \operatorname{rank}(d_2) = 7.
	$$
	The Betti numbers are:
	$$
	\beta_0 = \dim C_0 - \operatorname{rank}(d_1) = 8 - 7 = 1.
	$$
	$$
	\beta_2 = \dim C_2 - \operatorname{rank}(d_2) = 8 - 7 = 1.
	$$
	$$
	\beta_1 = (\dim C_1 - \operatorname{rank}(d_1)) - \operatorname{rank}(d_2) = (16 - 7) - 7 = 2.
	$$
	The reduced homology groups are
	$$
	\widetilde{H}_{i}(X)\;\cong\;
	\begin{cases}
	\mathbf{\mathbb{Z}^{2}}, & i=1,\\[4pt]
	\mathbf{\mathbb{Z}}, & i=2,\\[4pt]
	0, & \text{otherwise}.
	\end{cases}
	$$
	The Betti numbers match the number of $S^{1}$ and $S^{2}$ summands in $S^{1}\vee S^{1}\vee S^{2}$.
	
	Furthermore, from the two independent loops $\ell_1$ and $\ell_2$, we have:
	$$
	\pi_{1}(X)\;\cong\;F_{2},
	$$
	where $F_{2}$ denotes the free group on two generators. This matches $\pi_1(S^{1}\vee S^{1}\vee S^{2})$.
	
	The crucial isomorphism for the second homotopy group is $\pi_2(\mathcal{K}(X)) \cong \mathbf{\mathbb{Z}}$. This follows from the non-trivial $H_2(\mathcal{K}(X)) \cong \mathbb{Z}$ combined with the Hurewicz Theorem applied to the simply connected universal cover $\widetilde{\mathcal{K}(X)}$. Specifically, $\pi_2(\mathcal{K}(X)) \cong \pi_2(\widetilde{\mathcal{K}(X)}) \cong H_2(\widetilde{\mathcal{K}(X)}) \cong \mathbb{Z}$.
	
	Since $\mathcal{K}(X)$ is a $2$-dimensional CW complex and its homotopy groups $\pi_1$ and $\pi_2$ are isomorphic to those of $S^{1}\vee S^{1}\vee S^{2}$, the Whitehead Theorem guarantees that they are homotopy equivalent.
	
	Hence, we conclude that
	$$
	\mathcal{K}(X) \simeq S^{1}\vee S^{1}\vee S^{2}.
	$$
	This proves that $\mathcal{K}(X)$ is topologically identical to $S^1 \vee S^1 \vee S^2$ up to continuous deformation. Furthermore, this equivalence ensures that $\pi_n(\mathcal{K}(X)) \cong \pi_n(S^2)$ for all $n \ge 3$.
	
	Similar computation can be done to all the remaining configurations 
	described in this theorem. 
	In each case, the ranks of the first and second homology groups correspond, respectively, 
	to the number of $S^{1}$ and $S^{2}$ summands in the wedge decomposition 
	indicated in the figures.
		
		\item When $(\beta_1,\beta_2)=(2,2)$ and $(\alpha_1,\alpha_2)=(2,2)$ with
		\[
		\#(\hat{U}_{b_1}\cap\hat{U}_{b_2})=1,\qquad
		\#(\hat{F}_{b_1}\cap\hat{F}_{b_2})=2,
		\]
		such that
		\[
		\hat{U}_{b_1}\cap\hat{U}_{b_2}=\{c_2\}, \qquad
		\hat{F}_{b_1}\cap\hat{F}_{b_2}=\{a_1,a_2\},
		\]
		there are two subcases depending on $\hat{U}_{a_3}\cap\mathrm{mnl}(X)$:
		
		\begin{enumerate}[(i)]
			\item $\hat{U}_{a_3}\cap\mathrm{mnl}(X)=\{c_1,c_3\}$, giving a space representing $S^{1}$ (Fig.~\ref{o}(a); dual $(a^{\ast})$).
			\item $\hat{U}_{a_3}\cap\mathrm{mnl}(X)=\{c_1,c_2,c_3\}$, giving a space representing $S^{1}\vee S^{1}$ (Fig.~\ref{o}(d); dual $d^{\ast}$).
		\end{enumerate}
		
		\noindent Moreover, the symmetric situation obtained by swapping the roles of the upper and lower intersections,
		\[
		\#(\hat{U}_{b_1}\cap\hat{U}_{b_2})=2,\qquad
		\#(\hat{F}_{b_1}\cap\hat{F}_{b_2})=1,
		\]
		(i.e. $\hat{U}_{b_1}\cap\hat{U}_{b_2}=\{a_1,a_2\}$ and $\hat{F}_{b_1}\cap\hat{F}_{b_2}=\{c_2\}$ up to relabelling) yields the exact dual configurations of the two subcases above — in particular one obtains the dual spaces $(a^{\ast})$ and $(e^{\ast})$ respectively.
		
		\item When $(\beta_1,\beta_2)=(2,2)$, $(\alpha_1,\alpha_2)=(2,2)$ such that $\#(\hat{U}_{b_1}\cap\hat{U}_{b_2})=1$ and $\#(\hat{F}_{b_1}\cap\hat{F}_{b_2})=1$, one obtains a space representing $S^{1}\vee S^{1}$ (e.g. $\hat{U}_{b_1}\cap\hat{U}_{b_2}= \{c_2\}$, $\hat{F}_{b_1}\cap\hat{F}_{b_2}= \{a_2\}$, $\hat{U}_{a_3}\cap \mathrm{mnl(X)}= \{c_1, c_2, c_3\}$ and $\hat{F}_{c_3}\cap \mathrm{mxl(X)}= \{a_1, a_2, a_3\}$) (Fig.~\ref{o} (e), same dual).
		
		\item When $(\beta_1,\beta_2)=(2,3)$ and $(\alpha_1,\alpha_2)=(2,2)$ with
		\[
		\#(\hat{U}_{b_1}\cap\hat{U}_{b_2})=2,\qquad
		\#(\hat{F}_{b_1}\cap\hat{F}_{b_2})=2,
		\]
		such that
		\[
		\hat{U}_{b_1}\cap\hat{U}_{b_2}=\{c_1,c_2\}, \qquad
		\hat{F}_{b_1}\cap\hat{F}_{b_2}=\{a_1,a_2\}, \qquad
		\hat{U}_{a_3}\cap\mathrm{mnl}(X)=\{c_1,c_2,c_3\},
		\]
		there are two subcases depending on $\hat{F}_{c_3}\cap\mathrm{mxl}(X)$:
		
		\begin{enumerate}[(i)]
			\item $\hat{F}_{c_3}\cap\mathrm{mxl}(X)=\{a_2,a_3\}$, giving a space representing $S^{2}\vee S^{1}$ (Fig.~\ref{o}(c); dual $c^{\ast}$).
			\item $\hat{F}_{c_3}\cap\mathrm{mxl}(X)=\{a_1,a_2,a_3\}$, giving a space representing $S^{1}\vee S^{1}\vee S^{2}$ (Fig.~\ref{k}(d); dual $d^{\ast}$).
		\end{enumerate}
		
		\noindent The dual configurations occur when $(\beta_1,\beta_2)=(2,2)$ and $(\alpha_1,\alpha_2)=(2,3)$, producing exactly the dual spaces of the two subcases above.
		
		\item When $(\beta_1,\beta_2)=(3,3)$ and $(\alpha_1,\alpha_2)=(2,2)$ with
		\[
		\#(\hat{U}_{b_1}\cap\hat{U}_{b_2})=2,\qquad
		\#(\hat{F}_{b_1}\cap\hat{F}_{b_2})=3,
		\]
		such that
		\[
		\hat{U}_{b_1}\cap\hat{U}_{b_2}=\{c_1,c_2\}, \qquad
		\hat{F}_{b_1}\cap\hat{F}_{b_2}=\{a_1,a_2,a_3\},
		\]
		there are two possibilities depending on $\hat{F}_{c_3}\cap\mathrm{mxl}(X)$:
		
		\begin{enumerate}[(i)]
			\item $\hat{F}_{c_3}\cap\mathrm{mxl}(X)=\{a_2,a_3\}$ or $\{a_1,a_3\}$ or $\{a_1,a_2\}$, giving spaces representing $S^{1}\vee S^{2}\vee S^{2}$.  
			These are depicted in Fig.~\ref{l`}(c), (d), (e), whose duals are $(c^{\ast})$, $(d^{\ast})$ and $(e^{\ast})$, respectively.  
			In this case, the spaces (c), (d) and (e) are isomorphic to each other.
			
			\item $\hat{F}_{c_3}\cap\mathrm{mxl}(X)=\{a_1,a_2,a_3\}$, giving a space representing $S^{2}\vee S^{1}$ (Fig.~\ref{o}(f); dual $f^{\ast}$).
		\end{enumerate}
		
		\noindent The dual configurations occur when $(\beta_1,\beta_2)=(2,2)$ and $(\alpha_1,\alpha_2)=(3,3)$, producing exactly the dual spaces of the two subcases above.
		
		\item When $(\beta_1,\beta_2)=(3,3)$, $(\alpha_1,\alpha_2)=(3,3)$, one obtains a space representing $S^{2}\vee S^{2}\vee S^{2}\vee S^{2}$ (Fig.~\ref{p} (b)).
	\end{enumerate}
	
	Observe that no new configurations arise from interchanging the roles of $(\beta_1,\beta_2)$ and
	$(\alpha_1,\alpha_2)$ while preserving their unordered pairs.  
	In particular, the cases
	\[
	(\beta_1,\beta_2)=(3,2),\; (\alpha_1,\alpha_2)=(2,2), \qquad
	(\beta_1,\beta_2)=(2,2),\; (\alpha_1,\alpha_2)=(3,2),
	\]
	\[
	(\beta_1,\beta_2)=(2,3),\; (\alpha_1,\alpha_2)=(2,2), \qquad
	(\beta_1,\beta_2)=(2,2),\; (\alpha_1,\alpha_2)=(2,3)
	\]
	are all equivalent, up to relabelling, to the configuration already considered.  
	Hence, these do not yield any additional finite spaces.
	
	Hence, Subcase~2.2 yields finite models of six different homotopy types: 
	$S^{1}$, $S^{1}\vee S^{1}$, $S^{2}\vee S^{1}$, $S^{1}\vee S^{1}\vee S^{2}$, $S^{1}\vee S^{1}\vee S^{1}\vee S^{2}$, $S^{1}\vee S^{2}\vee S^{2}$ and $S^{2}\vee S^{2}\vee S^{2}\vee S^{2}$.
	
	\medskip
	\noindent\textbf{Case 3: $\#B_X=3$.}
	
	In this case, $\#\mathrm{mxl}(X)=2$ and $\#\mathrm{mnl}(X)=3$, up to duality.  
	Let 
	\[
	\mathrm{mxl}(X)=\{a_1,a_2\}, \quad B_X=\{b_1,b_2,b_3\}, \quad \mathrm{mnl}(X)=\{c_1,c_2,c_3\}.
	\]
	As $B_X$ is an antichain and, by Remark~2.3(2) of \cite{Cianci-Ottina(2020)}, each $b_i$ must be connected to both maximal elements, i.e., for $ i=1,2,3$,
	\[
	\beta= \#(\hat{F}_{b_i}\cap \mathrm{mxl}(X))=2
	\quad \text{and} \quad
	\alpha_i=\#(\hat{U}_{b_i}\cap \mathrm{mnl}(X)) \in \{2,3\}
	\]
	A detailed analysis of all possibilities for $(\alpha_1,\alpha_2,\alpha_3)$, ensuring connectedness and the absence of beat points, yields the following minimal configurations:
	
	\begin{enumerate}
		\item When $(\alpha_1, \alpha_2, \alpha_3)=(2,2,2)$ and $\#(\hat{U}_{b_i}\cap\hat{U}_{b_j})=1$, one obtains a space representing $S^{2}$ (Fig.~\ref{o}(g), dual is ($g^{\ast}$).
		
		\item When $(\alpha_1,\alpha_2, \alpha_3)=(2,2,2)$ and $\#(\hat{U}_{b_i}\cap\hat{U}_{b_j})=2$ such that $\hat{U}_{b_1}\cap\hat{U}_{b_2}\cap\hat{U}_{b_3}=\{c_1, c_2\}$ and $\hat{F}_{c_3}\cap \mathrm{mxl(X)}=2$, one obtains a space representing $S^{1}\vee S^{2}\vee S^{2}$ (Fig.~\ref{l`}(a), dual is ($a^{\ast}$).
		
		\item When $(\alpha_1,\alpha_2, \alpha_3)=(2,3,2)$ such that $\#(\hat{U}_{b_1}\cap\hat{U}_{b_2})=2$, $\#(\hat{U}_{b_2}\cap\hat{U}_{b_3})=2$ and $\#(\hat{U}_{b_1}\cap\hat{U}_{b_3})=1$, one obtains a space representing $S^{2}\vee S^{2}$ (Fig.~\ref{o}(h), dual is ($h^{\ast}$). Note that the cases $(\alpha_1,\alpha_2,\alpha_3)=(2,3,2)$, $(2,2,3)$, and $(3,2,2)$ are isomorphic to each other.
		
		\item When $(\alpha_1,\alpha_2, \alpha_3)=(2,3,3)$ such that $\#(\hat{U}_{b_1}\cap\hat{U}_{b_2})=2$, $\#(\hat{U}_{b_2}\cap\hat{U}_{b_3})=3$ and $\#(\hat{U}_{b_1}\cap\hat{U}_{b_3})=2$, one obtains a space representing $S^{2}\vee S^{2}\vee S^{2}$ (Fig.~\ref{h`}(c), dual is ($c^{\ast}$). Note that the cases $(\alpha_1,\alpha_2,\alpha_3)=(2,3,3)$, $(3,2,3)$, and $(3,3,2)$ are isomorphic to each other.
		
		\item When $(\alpha_1,\alpha_2, \alpha_3)=(3,3,3)$, one obtains a space representing $S^{2}\vee S^{2}\vee S^{2}\vee S^{2}$ (Fig.~\ref{p}(a), dual is ($a^{\ast}$).
	\end{enumerate}
	
	\medskip
	\noindent\textbf{Case 4: $\#B_X=4$.}
	
	In this situation, we must have $\#\mathrm{mxl}(X)=\#\mathrm{mnl}(X)=2$.  
	Let 
	\[
	\mathrm{mxl}(X)=\{a_1,a_2\}, \quad B_X=\{b_1,b_2,b_3,b_4\}, \quad \mathrm{mnl}(X)=\{c_1,c_2\}.
	\]
	Since $B_X$ is an antichain and $X$ has no beat points, every $b_i$ must be connected to both maximal and both minimal elements.  
	Up to isomorphism, this gives only one connected configurations, which is a minimal finite model of $S^{2}\vee S^{2}\vee S^{2}$ (Fig.~\ref{h`}(b), same dual).

	Combining all the above cases, we have completed the classifiation of all $8$-point minimal finite $T_0$-spaces (cores) of height $2$. By Theorem \ref{8}, the space $S^2$ admit minimal finite model with fewer than $8$ points. By Theorem \ref{8.1}, the spaces $S^1,\quad S^1\vee S^1,\quad S^1\vee S^1\vee S^1,\quad S^1\vee S^1\vee S^1\vee S^1,$ and $S^1\vee S^1\vee S^1\vee S^1\vee S^1$ all admit minimal finite models of height $1$. Moreover, by Theorem \ref{18}, $S^2\vee S^1$ and $S^2\vee S^2$ also admit minimal finite  models with fewer than $8$ points. Consequently, the posets shown in Figures~\ref{n} and~\ref{o} are indeed $8$-point cores, but they are not minimal finite models.
	\medskip
	Since the dual of any minimal finite model is again minimal and represents the same homotopy type, we conclude that, up to duality, the eight point minimal finite models of $S^{1}\vee S^{1}\vee S^{2}$, $S^{1}\vee S^{1}\vee S^{1}\vee S^{2}$, $S^{1}\vee S^{2}\vee S^{2}$, $S^{2}\vee S^{2}\vee S^{2}$ and $S^{2}\vee S^{2}\vee S^{2}\vee S^{2}$ are precisely those displayed in Figures~\ref{k},~\ref{q},~\ref{l`}, ~\ref{h`}, and~\ref{p} respectively.
\end{proof}

\section{Conclusion and Future Work}

In this paper, we have presented a complete classification of minimal finite models for the wedge sums of spheres up to eight points and height two.
These results extend the known family of minimal finite spaces beyond the earlier cases of spheres and graphs studied by Barmak and Minian, offering a clearer picture of how combinatorial structures capture classical homotopy types.

Our findings reveal a close relationship between the combinatorial configuration of a finite poset and the homotopy type of its order complex. 
In particular, they show how minimality arises from precise balance between connectivity, intersection patterns, and height within a finite topology. 
This study thus contributes not only new examples but also a systematic framework for understanding how simple combinatorial data can model complex topological behavior.

\subsection*{Future Work}

The present classification suggests several natural directions for further research:
\begin{enumerate}
	\item \textbf{Extension to larger models:} Classify minimal finite $T_{0}$-spaces with nine and ten points, which are expected to model spaces such as $S^{1} \vee S^{1} \vee S^{2} \vee S^{2}$ or $S^{1} \vee S^{2} \vee S^{2} \vee S^{2}$.
	
	\item \textbf{Higher-dimensional wedges:} Investigate minimal finite models for wedge sums involving higher-dimensional spheres, such as $S^{3} \vee S^{2}$ or $S^{3} \vee S^{3}$, to understand how dimensionality influences minimality.
	
	\item \textbf{Mixed and non-spherical components:} Explore finite models of spaces combining different homotopy types, e.g., $RP^{2} \vee S^{2}$, $T^{2} \vee S^{1}$, or higher-genus surfaces, extending the current methods to more complex CW-structures.
	
	\item \textbf{Algorithmic generation:} Develop computational tools and automated algorithms to generate, verify, and classify candidate minimal models using poset-based or discrete Morse theoretic approaches.
	
	\item \textbf{Theoretical generalization:} Establish general lower bounds and uniqueness criteria for minimal models of wedge sums, extending the $2n+2$ bound known for $S^{n}$ to composite spaces.
\end{enumerate}

Overall, this study provides a complete description of minimal finite models up to eight points and lays the foundation for extending the theory to higher-dimensional and more intricate topological spaces.

{\protect\numberline{}}


\begin{thebibliography}{99}
	\bibitem{Barmak(2007)}
	{Barmak, J.A. and Minian, E.G.}: {Minimal Finite Models. Journal of Homotopy and Related Structures, 2(1),127-140(2007)}.
	\href{https://mathscinet.ams.org/mathscinet/article?mr=2369155}{MR2369155}.
	
	\bibitem{Barmak-Minian(2008)}
	{Barmak, J.A.  and Minian, G.}: {One-point reductions of finite spaces, h-regular CW--complexes and collapsibility. Algebr. Geom. Topol. 8(3) ,1763-1780 (2008)}.
	\href{https://mathscinet.ams.org/mathscinet/article?mr=2448871}{MR2448871}.
	
	\bibitem{Barmak(2011)}
	{Barmak, J. A.}: {Algebraic topology of finite topological spaces and applications, Lecture Notes in Mathematics, volume 2032, Springer, Heidelberg, Berlin (2011)}.
	\href{https://mathscinet.ams.org/mathscinet/article?mr=3024764}{MR3024764}.
	
	\bibitem{Baues(1996)}
	{Baues, H.J.}: {Homotopy Type and Homology, Oxford University Press Inc., New York (1996)}.
	\href{https://mathscinet.ams.org/mathscinet/article?mr=1404516}{MR1404516}.
	
	\bibitem{Cianci-Ottina(2018)}
	{Cianci, N. and Ottina, M.}: {Poset splitting and minimality of finite models. Journal of Combinatorial Theory, Series A 157 , 120-161 (2018)}.
	\href{https://mathscinet.ams.org/mathscinet/article?mr=3780410}{MR3780410}.
	
	\bibitem{Cianci-Ottina(2020)}
	{Cianci, N. and Ottina, M.}: {Smallest Weakly Contractible Non-Contractible Topological Spaces. Edinbergh Mathematical Society,63(1),263-274(2020)}.
	\href{https://mathscinet.ams.org/mathscinet/article?mr=4054783}{MR4054783}.
	
	\bibitem{Greenberg-Harper(1981)}
	{Greenberg, M.J. and Harper, J.R.}: {Algebraic topology, A First Course, The Benzamin/Cummings(1981)}.
	\href{https://mathscinet.ams.org/mathscinet/article?mr=643101}{MR0643101}.
	
	\bibitem{Hatcher(2002)}
	{Hatcher, A.}: {Algebraic topology. Cambridge University Press, (2002)}.
	\href{https://mathscinet.ams.org/mathscinet/article?mr=1867354}{MR1867354}.
	
	\bibitem{Hilton(1955)}
	{Hilton, P. J.}: {On the homotopy groups of union of spheres. Journal of the London Mathematical Society, 30,154-172 (1955)}.
	\href{https://mathscinet.ams.org/mathscinet/article?mr=68218}{MR0068218}.
	
	\bibitem{May(2003)}
	{May, J.P}: {Finite spaces and simplicial complexes. Notes for REU (2003)}.
	Available
	at \url{http://www.math.uchicago.edu/~may/MISCMaster.html}.
	
	\bibitem{Munkres(1984)}
	{Munkres, J.R.}: {Elements of Algebraic topology. Addison-Wesley (1984)}.
	\href{https://mathscinet.ams.org/mathscinet/article?mr=755006}{MR0755006}.
	
	\bibitem{Munkres(2000)}
	{Munkres, J.R.}: {Topology, Second Edition. Prentice Hall, (2000)}.
	\href{https://mathscinet.ams.org/mathscinet/article?mr=3728284}{MR3728284}.
	
\end{thebibliography}
\end{document}